\newtheorem{thm}{Theorem}[section]
\newtheorem{lem}[thm]{Lemma}
\newtheorem{prop}[thm]{Proposition}
\newcommand{\1}{\partial}
\newcommand{\2}{\overline}
\newcommand{\3}{\varepsilon}
\newcommand{\R}{{\mathbb R}}
\newcommand{\Z}{{\mathbb Z}}
\newcommand{\bs}{\backslash}
\newcommand{\La}{\Delta}
\begin{document}
\title{Uniqueness and time oscillating behaviour of finite\\ 
points blow-up solutions of the fast diffusion equation}
\author{Kin Ming Hui\\
Institute of Mathematics, Academia Sinica\\
Taipei, Taiwan, R. O. C.}
\date{May 26, 2018}
\smallbreak \maketitle
\begin{abstract}
Let $n\ge 3$ and $0<m<\frac{n-2}{n}$. We will extend the results of J.L. Vazquez and M. Winkler \cite{VW2} and prove the uniqueness of finite points blow-up solutions of the fast diffusion equation $u_t=\Delta u^m$ in both bounded domains and $\R^n\times (0,\infty)$. We will also construct initial data such that the corresponding solution of the fast diffusion equation in bounded domain oscillate between infinity and some positive constant as $t\to\infty$.
\end{abstract}

\vskip 0.2truein

Key words: uniqueness, fast diffusion equation, time oscillating behaviour

AMS 2010 Mathematics Subject Classification: Primary 35K65 Secondary 35B51, 35B40

\vskip 0.2truein
\setcounter{equation}{0}
\setcounter{section}{0}

\section{Introduction}
\setcounter{equation}{0}
\setcounter{thm}{0}

The equation
\begin{equation}\label{fde}
u_t=\Delta u^m
\end{equation}
arises in many physical and geometrical models \cite{A}, \cite{DK}, \cite{V1}, \cite{V2}. When $m>1$, \eqref{fde}  is called porous medium equation which appears in the modeling of the flow of  gases through porous media and oil passing through sand etc. \eqref{fde} also arises as the large time asymptotic limit solution of the compressible Euler equation with damping \cite{HPW}, \cite{LZ}. When $m=1$, \eqref{fde} is the heat equation.  When  $0<m<1$,  \eqref{fde} is called the fast diffusion equation. When $m=\frac{n-2}{n+2}$ and $n\ge 3$,  \eqref{fde} arises in the study of Yamabe flow on $\R^n$ \cite{DS1}, \cite{DS2}, \cite{PS}. Note that the metric $g_{ij}=u^{\frac{4}{n+2}}dx^2$, $u>0$, $n\ge 3$, is a solution of the Yamabe flow  \cite{DS2}, \cite{PS},
\begin{equation*}
\frac{\1 g_{ij}}{\1 t}=-Rg_{ij} \quad \mbox{ in }\R^n\times (0,T)
\end{equation*}
if and only if  $u$ is a solution of 
\begin{equation*}
u_t=\frac{n-1}{m}\Delta u^m
\end{equation*}
in $\R^n\times (0,T)$
with $m=\frac{n-2}{n+2}$ where $R(\cdot,t)$ is the scalar curvature of the metric $g_{ij}(\cdot,t)$.    Recently F.~Golse and F.~Salvarani \cite{GS}, B.~Choi and K.~Lee \cite{CL}, have shown that \eqref{fde} also appears as the nonlinear diffusion limit for the generalized Carleman models.

Although there are a lot of study of \eqref{fde} for the case $m>\frac{(n-2)_+}{n}$, there are not many results of \eqref{fde} for the case $0<m<\frac{n-2}{n}$, $n\ge 3$. When $0<m\le\frac{n-2}{n}$ and $n\ge 3$,
existence of positive smooth solutions of 
\begin{equation*}
\left\{\begin{aligned}
u_t=&\Delta u^m, u\ge 0,\quad\mbox{ in }\R^n\times (0,T)\\
u(x,0)=&u_0\qquad\qquad\quad\mbox{ in }\R^n
\end{aligned}\right.
\end{equation*}
for any $0\le u_0\in L_{loc}^p(\R^n)$, $p>(1-m)n/2$, satisfying the condition,
\begin{equation*}
\liminf_{R\to\infty}\frac{1}{R^{n-\frac{2}{1-m}}}\int_{|x|\le R}u_0\,dx
\ge C_1T^{\frac{1}{1-m}}
\end{equation*}
for some constant $C_1>0$ is proved by S.Y.~Hsu in \cite{Hs}.

Let $\Omega\subset\R^n$ be a smooth bounded domain. When $0<m\le\frac{n-2}{n}$, $n\ge 3$ and $0\in\Omega$, existence of singular solutions and asymptotic large time behaviour of \eqref{fde} in $(\Omega\setminus\{0\})\times (0,\infty)$ which blows up at $\{0\}\times (0,\infty)$ when the initial value $u_0$ satisfies 
\begin{equation*}
c_1|x|^{-\gamma_1}\le u_0(x)\le c_2 |x|^{-\gamma_2}\quad\forall x\in\Omega\setminus\{0\}
\end{equation*}
for  some constants $c_1>0$, $c_2>0$ and $\gamma_2\ge\gamma_1>\frac{2}{1-m}$ were proved by J.L. Vazquez and M. Winkler in \cite{VW1}. Uniqueness of singular solutions of \eqref{fde} in $(\Omega\setminus\{0\})\times (0,\infty)$ that blows up at $\{0\}\times (0,\infty)$ and existence of singular initial data such that the corresponding singular solution of \eqref{fde} in $(\Omega\setminus\{0\})\times (0,\infty)$ oscillates between infinity and some positive constant as $t\to\infty$ are proved by J.L. Vazquez and M. Winkler in \cite{VW2}.  

When $0<m\le\frac{n-2}{n}$ and $n\ge 3$, existence  of singular solutions of \eqref{fde} in $(\R^n\setminus\{0\})\times (0,\infty)$ which blows up at $\{0\}\times (0,\infty)$  when the initial value $u_0$ satisfies 
\begin{equation*}
c_1|x|^{-\gamma}\le u_0(x)\le c_2 |x|^{-\gamma}\quad\forall x\in\R^n\setminus\{0\}
\end{equation*}
for some constants $c_1>0$, $c_2>0$ and $\frac{2}{1-m}<\gamma<\frac{n-2}{m}$ was proved by K. M. Hui and Soojung Kim in \cite{HKs}. Asymptotic large time behaviour of such solution was also proved by K. M. Hui and Soojung Kim in \cite{HKs} when $\frac{2}{1-m}<\gamma<n$. 

Let $a_1,a_2,\dots,a_{i_0}\in\Omega$, $\widehat{\Omega}=\Omega\setminus\{a_1,a_2,\dots,a_{i_0}\}$ and $\widehat{\R^n}=\R^n\setminus\{a_1,a_2,\dots,a_{i_0}\}$.  For any $\delta>0$, let $\Omega_{\delta}=\Omega\bs\left(\cup_{i=1}^{i_0}B_{\delta}(a_i)\right)$ and  $\R^n_{\delta}=\R^n\bs\left(\cup_{i=1}^{i_0}B_{\delta}(a_i)\right)$ where $B_R(x_0)=\{x\in\R^n:|x-x_0|<R\}$, $B_R=B_R(0)$, $\widehat{B}_R(x_0)=B_R(x_0)\bs\left\{x_0\right\}$ and $\widehat{B}_R=\widehat{B}_R(0)$ for any $x_0\in\R^n$ and $R>0$. Let $\delta_0(\Omega)=\frac{1}{3}\min_{1\leq i,j\leq i_0}\left(\textbf{dist}(a_i,\Omega),\left|a_i-a_j\right|\right)$ and $\delta_0(\R^n)=\frac{1}{3}\min_{1\leq i,j\leq i_0}|a_i-a_j|$. For any $0<\delta\le\delta_0(\Omega)$, let $D_{\delta}=\{x\in\Omega:\mbox{ dist}\,(x,\1\Omega)<\delta\}$. Let $R_0>0$ be such that  $a_1,\cdots,a_{i_0}\in B_{R_0}$. For any $R>R_0$ and $0<\delta\le\delta_0(\R^n)$,
let $\Omega_{\delta,R}=B_R\setminus \cup_{i=1}^{i_0}B_{\delta}(a_i)$.
When there is no ambiguity we will drop the parameter and write $\delta_0$ instead of $\delta_0(\Omega)$ or $\delta_0(\R^n)$. 
Unless stated otherwise we will assume that $0<m<\frac{n-2}{n}$ and $n\ge 3$ for the rest of the paper.

Existence of singular solutions of \eqref{fde} in $\widehat{\Omega}\times (0,T)$ which blows up at $\{a_1,a_2,\dots,a_{i_0}\}\times (0,T)$ was proved by K.M.~Hui and Sunghoon Kim in \cite{HK1} when  the initial value $u_0$ satisfies 
\begin{equation*}
u_0(x)\approx |x-a_i|^{-\gamma_i}\quad\mbox{ for }x\approx a_i\quad\forall i=1,2,\dots,i_0
\end{equation*} 
for some constants $\gamma_i>\max\left(\frac{n}{2m},\frac{n-2}{m}\right)$ for any $i=1,2,\dots,i_0$.  When  $0\le f\in L^{\infty}(\partial\Omega\times[0,\infty))$ and the initial value $0\leq u_0\in L_{loc}^p(\2{\Omega}\setminus\{a_1,\cdots,a_{i_0}\})$ ($L^p_{loc}(\widehat{\R^n})$ respectively) for some constant  $p>\frac{n(1-m)}{2}$  satisfies 
\begin{equation}\label{u0-lower-blow-up-rate}
u_0(x)\geq \frac{\lambda_i}{|x-a_i|^{\gamma_i}} \qquad \forall 0<|x-a_i|<\delta_1,\,\,i=1, \cdots, i_0
\end{equation}
for some constants $0<\delta_1<\min (1,\delta_0)$, $\lambda_1$, $\cdots$, $\lambda_{i_0}\in\R^+$ and $\gamma_1,\cdots,\gamma_{i_0}\in \left(\frac{2}{1-m},\infty\right)$, existence of singular solutions of 
\begin{equation}\label{Dirichlet-blow-up-problem}
\begin{cases}
\begin{aligned}
u_t&=\La u^m \quad \mbox{in $\widehat{\Omega}\times(0,\infty)$}\\
u&=f \qquad \mbox{ on $\partial\Omega\times(0,\infty)$}\\
u(a_i,t)&=\infty \qquad \forall t>0, i=1,\cdots,i_0\\
u(x,0)&=u_0(x) \quad \mbox{in $\widehat{\Omega}$}
\end{aligned}
\end{cases}
\end{equation}
and 
\begin{equation}\label{cauchy-blow-up-problem}
\begin{cases}
\begin{aligned}
u_t&=\La u^m \quad \mbox{in $\widehat{\R^n}\times(0,\infty)$}\\
u(a_i,t)&=\infty \qquad \forall i=1,\cdots,i_0,\,\,t>0\\
u(x,0)&=u_0(x) \quad \mbox{in $\widehat{\R^n}$}
\end{aligned}
\end{cases}
\end{equation}
respectively were proved by K.M.~Hui and Sunghoon Kim in \cite{HK2}. It was proved in \cite{HK2} that the singular solutions of \eqref{Dirichlet-blow-up-problem} and \eqref{cauchy-blow-up-problem} constructed in \cite{HK2} have the property  that for any $T>0$ and $\delta_2\in (0,\delta_1)$ there exists a constant $C_1>0$  such that
\begin{equation}\label{eq-lower-limit-of-u-near-blow-up-points}
u(x,t) \ge\frac{C_1}{|x-a_i|^{\gamma_i}}\quad \forall 0<|x-a_i|<\delta_2, 0<t<T, i=1,2,\cdots,i_0.
\end{equation}
Moreover \cite{HK2} if the initial value $u_0$ also satisfies
\begin{equation}\label{upper-blow-up-rate-initial-data}
u_0(x)\le\frac{\lambda_i'}{|x-a_i|^{\gamma_i'}} \qquad \forall 0<|x-a_i|<\delta_1,i=1, \cdots, i_0,
\end{equation}
for some constants $\lambda_1'$, $\cdots$, $\lambda_{i_0}'\in\R^+$, and $\gamma_i'\ge\gamma_i$ for all $i=1,\cdots,i_0$, then for any $T>0$ and $\delta_2\in (0,\delta_1)$ there exists a constant $C_2>0$  such that the singular solutions of \eqref{Dirichlet-blow-up-problem} and \eqref{cauchy-blow-up-problem} constructed in \cite{HK2} satisfies 
\begin{equation}\label{u-upper-blow-up-rate}
u(x,t)\le\frac{C_2}{|x-a_i|^{\gamma_i'}} \quad \forall 0<|x-a_i|<\delta_2, 0<t<T, i=1,2,\cdots,i_0.
\end{equation}
When $f\ge\mu_0$ and $u_0\ge\mu_0$ for some constant $\mu_0>0$, the singular solutions of \eqref{Dirichlet-blow-up-problem} and \eqref{cauchy-blow-up-problem} constructed in \cite{HK2} also satisfy 
\begin{equation}\label{u-lower-bd5}
u(x,t)\ge\mu_0\quad\forall x\in \widehat{\Omega}\,\,\, (\widehat{\R^n} \mbox{ respectively}), t>0.
\end{equation}
Asymptotic large time behaviour of such singular solutions was also proved by K.M.~Hui and Sunghoon Kim in \cite{HK2}.

In this paper we will extend the results of J.L. Vazquez and M. Winkler \cite{VW2} and prove the uniqueness of singular solutions of  \eqref{Dirichlet-blow-up-problem} and \eqref{cauchy-blow-up-problem}. We will also construct initial data $u_0$ such that the corresponding singular solution of \eqref{Dirichlet-blow-up-problem} with $f=\mu_0>0$ oscillates between infinity and some positive constant as $t\to\infty$. More precisely we will prove the following results.

\begin{thm}\label{uniqueness-thm-bd-domain}
Let $n\geq 3$, $0<m<\frac{n-2}{n}$, $0<\delta_1<\min (1,\delta_0)$, $\mu_0>0$, $f_1, f_2\in C^3(\partial\Omega\times (0,\infty))\cap L^{\infty}(\partial\Omega\times (0,\infty))$ be such that $f_2\ge f_1\ge\mu_0$ on $\partial\Omega\times (0,\infty)$ and 
\begin{equation}\label{u0-1-2-ineqn}
\mu_0\leq u_{0,1}\le u_{0,2}\in L_{loc}^p(\2{\Omega}\setminus\{a_1,\cdots,a_{i_0}\})\quad\mbox{ for some constant  }p>\frac{n(1-m)}{2}
\end{equation}
be such that 
\begin{equation}\label{2initial-value-lower-upper-bd}
\frac{\lambda_i}{|x-a_i|^{\gamma_i}}\le u_{0,1}(x)\le u_{0,2}\le \frac{\lambda_i'}{|x-a_i|^{\gamma_i'}} \qquad \forall 0<|x-a_i|<\delta_1,i=1, \cdots, i_0
\end{equation}
holds for some constants $\lambda_1$, $\cdots$, $\lambda_{i_0}$, $\lambda_1'$, $\cdots$, $\lambda_{i_0}'\in\R^+$ and 
\begin{equation}\label{gamma-gamma'-lower-bd}
\gamma_i'\ge\gamma_i>\frac{2}{1-m}\quad\forall i=1,2,\dots,i_0.
\end{equation}  
Suppose $u_1$, $u_2$, are the solutions of \eqref{Dirichlet-blow-up-problem} with $u_0=u_{0,1}, u_{0,2}$, $f=f_1, f_2$, respectively which satisfies
\begin{equation}\label{u1-2-lower-bd}
u_j(x,t)\ge\mu_0\quad\forall x\in \widehat{\Omega}, t>0, j=1,2
\end{equation}
such that for any constants $T>0$ and $\delta_2\in (0,\delta_1)$ there exist constants $C_1=C_1(T)>0$, $C_2=C_2(T)>0$, such that
\begin{equation}\label{u-lower-upper-blow-up-rate}
\frac{C_1}{|x-a_i|^{\gamma_i}}\le u_j(x,t)\le\frac{C_2}{|x-a_i|^{\gamma_i'}} \quad \forall 0<|x-a_i|<\delta_2, 0<t<T, i=1,2,\dots,i_0, j=1,2.
\end{equation}
Suppose $u_1$, $u_2$, also satisfies
\begin{equation}\label{initial-value-l1-sense}
\|u_i(\cdot,t)-u_{0,i}\|_{L^1(\Omega_{\delta})}\to 0\quad\mbox{ as }t\to 0\quad\forall 0<\delta<\delta_0, i=1,2.
\end{equation}
Then 
\begin{equation}\label{u1-u2-compare}
u_1(x,t)\le u_2(x,t)\quad\forall x\in \widehat{\Omega}, t>0.
\end{equation}
\end{thm}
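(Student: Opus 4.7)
The strategy is to adapt the framework of Vazquez and Winkler \cite{VW2} from the single-point case to the multi-point case. I would prove \eqref{u1-u2-compare} via a weighted $L^1$-type contraction estimate: apply Kato's inequality on $\Omega_\delta=\Omega\setminus\cup_{i=1}^{i_0}B_\delta(a_i)$ with a carefully chosen test weight
\[
\phi(x)=\prod_{i=1}^{i_0}|x-a_i|^{\beta_i},
\]
where the $\beta_i>0$ are to be determined, pass to the limit $\delta\to 0$, and close with Gronwall. Subtracting the equations gives $(u_1-u_2)_t=\Delta(u_1^m-u_2^m)$ on $\widehat\Omega\times(0,\infty)$, and Kato's inequality on $\Omega_\delta$ yields
\[
\frac{d}{dt}\int_{\Omega_\delta}(u_1-u_2)_+\phi\,dx \le \int_{\Omega_\delta}(u_1^m-u_2^m)_+\Delta\phi\,dx + \int_{\partial\Omega}\phi\,\partial_\nu(u_1^m-u_2^m)_+\,dS + \mathcal{B}_\delta(t),
\]
where $\mathcal{B}_\delta(t)$ collects the boundary flux contributions on $\cup_{i=1}^{i_0}\partial B_\delta(a_i)$. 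Since $f_1\le f_2$ forces $(u_1^m-u_2^m)_+=0$ on $\partial\Omega$, the Hopf lemma makes the $\partial\Omega$ integral non-positive.

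The key step is to control $\mathcal{B}_\delta(t)$ and let $\delta\to 0$. Combining the two-sided blow-up rate \eqref{u-lower-upper-blow-up-rate} with a scaling argument and interior parabolic regularity on dyadic annuli centered at each $a_i$, I would derive the sharp gradient bound $|\nabla u_j^m(x,t)|\le C|x-a_i|^{\gamma_i(1-m)-\gamma_i'-1}$ near $a_i$; here the lower blow-up rate $u_j\ge C|x-a_i|^{-\gamma_i}$ is crucial for controlling $u_j^{m-1}$ since $m-1<0$. Similarly, $(u_1^m-u_2^m)_+\le C|x-a_i|^{\gamma_i(1-m)-\gamma_i'}$ near $a_i$. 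With $\phi\sim|x-a_i|^{\beta_i}$ and $|\nabla\phi|\le C|x-a_i|^{\beta_i-1}$ there, each boundary flux on $\partial B_\delta(a_i)$ is of size $C\delta^{n-2+\gamma_i(1-m)-\gamma_i'+\beta_i}$, which vanishes as $\delta\to 0$ provided $\beta_i>\gamma_i'-\gamma_i(1-m)-(n-2)$.

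After passing to the limit $\delta\to 0$ and using the $L^1$-continuity \eqref{initial-value-l1-sense} together with $u_{0,1}\le u_{0,2}$, I would obtain
\[
\int_{\widehat\Omega}(u_1(t)-u_2(t))_+\phi\,dx \le \int_0^t\int_{\widehat\Omega}(u_1^m-u_2^m)_+|\Delta\phi|\,dx\,ds.
\]
Near each $a_i$, the mean-value theorem combined with $u_j\ge C|x-a_i|^{-\gamma_i}$ gives $(u_1^m-u_2^m)_+\le C|x-a_i|^{\gamma_i(1-m)}(u_1-u_2)_+$, while $|\Delta\phi|\le C|x-a_i|^{\beta_i-2}$ and $\phi\sim|x-a_i|^{\beta_i}$; multiplying yields $(u_1^m-u_2^m)_+|\Delta\phi|\le C|x-a_i|^{\gamma_i(1-m)-2}(u_1-u_2)_+\phi$, and the factor $|x-a_i|^{\gamma_i(1-m)-2}$ is bounded precisely because $\gamma_i>\frac{2}{1-m}$ by \eqref{gamma-gamma'-lower-bd}. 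Away from the $a_i$'s, $u_j\ge\mu_0$ gives $(u_1^m-u_2^m)_+\le m\mu_0^{m-1}(u_1-u_2)_+$ and $|\Delta\phi|/\phi$ is bounded, so the analogous control holds. Gronwall's inequality then forces $\int_{\widehat\Omega}(u_1(t)-u_2(t))_+\phi\,dx=0$ for all $t>0$, and since $\phi>0$ on $\widehat\Omega$ we conclude \eqref{u1-u2-compare}.

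The hardest part will be establishing the sharp gradient estimate $|\nabla u_j^m|\le C|x-a_i|^{\gamma_i(1-m)-\gamma_i'-1}$ near each $a_i$, which requires using both the upper and the lower parts of \eqref{u-lower-upper-blow-up-rate} (not just the upper bound) combined with parabolic regularity theory applied on dyadic shells. The hypothesis $\gamma_i>\frac{2}{1-m}$ in \eqref{gamma-gamma'-lower-bd} is exactly what makes the Gronwall step close near the singular points.
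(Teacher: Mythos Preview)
Your overall strategy—a weighted $L^1$ Kato inequality closed by Gronwall, with the weight behaving like $|x-a_i|^{\beta_i}$ near each singular point and the hypothesis $\gamma_i>\frac{2}{1-m}$ used to absorb the factor $|x-a_i|^{-2}$ coming from $\Delta\phi$—is exactly the mechanism the paper uses. The difference lies in how you handle the neighbourhoods of the $a_i$, and there your route is harder than necessary and contains a real difficulty.

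You integrate over $\Omega_\delta$ and therefore pick up genuine surface integrals on $\cup_i\partial B_\delta(a_i)$ involving $\partial_\nu(u_1^m-u_2^m)$; to kill these you need the pointwise gradient bound $|\nabla u_j^m|\le C|x-a_i|^{\gamma_i(1-m)-\gamma_i'-1}$. The problem is that on a dyadic annulus $\{|x-a_i|\sim r\}$ the diffusion coefficient $mu_j^{m-1}$ ranges between constants times $r^{(1-m)\gamma_i'}$ and $r^{(1-m)\gamma_i}$, so the ellipticity ratio is of order $r^{(1-m)(\gamma_i-\gamma_i')}$, which blows up as $r\to 0$ whenever $\gamma_i'>\gamma_i$. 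Standard parabolic Schauder on the rescaled annulus then gives constants depending on this ratio, and the estimate you wrote does not follow directly. You may be able to rescue it with a more delicate argument, but it is a genuine additional obstacle, not a routine application of interior regularity.

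The paper avoids this entirely by keeping the integration over all of $\widehat\Omega$ and multiplying the weight $\psi$ (equal to $|x-a_i|^{\alpha}$ near $a_i$) by a smooth cutoff $w_\delta=\prod_i\phi_\delta(x-a_i)$ that vanishes near the singular points. Then there are no boundary terms at the $a_i$ at all; instead the product rule for $\Delta(\psi w_\delta)$ produces \emph{volume} integrals over the annuli $\{\delta/2\le|x-a_i|\le\delta\}$ of the form $\int |x-a_i|^{\alpha-2}(u_1^m-u_2^m)_+\,dx$, involving no derivatives of $u_j$. These are controlled by the very same pointwise bound $(u_1^m-u_2^m)_+\le C|x-a_i|^{(1-m)\gamma_i}(u_1-u_2)_+$ that you already use in the Gronwall step, and one lets $\delta\to 0$ using dominated convergence. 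In short: replace ``integrate on $\Omega_\delta$ and estimate the surface flux via $\nabla u^m$'' by ``integrate on $\widehat\Omega$ against $\psi w_\delta$''; the gradient estimate, and the degeneracy issue it raises, then simply disappear.
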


\begin{thm}\label{uniqueness-thm2-bd-domain}
Let $n\geq 3$, $0<m<\frac{n-2}{n}$, $0<\delta_1<\min (1,\delta_0)$, $\mu_0>0$, $\mu_0\le f_1\le f_2\in L^{\infty}(\partial\Omega\times (0,\infty))$ and \eqref{u0-1-2-ineqn},  \eqref{2initial-value-lower-upper-bd},
hold for some constants $\lambda_1$, $\cdots$, $\lambda_{i_0}$, $\lambda_1'$, $\cdots$, $\lambda_{i_0}'\in\R^+$ and 
\begin{equation}\label{gamma-gamma'-lower-bd2}
\frac{2}{1-m}<\gamma_i\le\gamma_i'<\frac{n-2}{m}\quad\forall i=1,2,\dots,i_0.
\end{equation}  
Suppose $u_1$, $u_2$, are the solutions of \eqref{Dirichlet-blow-up-problem} with $u_0=u_{0,1}, u_{0,2}$, $f=f_1, f_2$, respectively which 
satisfies  \eqref{u1-2-lower-bd} and \eqref{initial-value-l1-sense} 
such that for any constants $T>0$ and $\delta_2\in (0,\delta_1)$ there exist constants $C_1=C_1(T)>0$, $C_2=C_2(T)>0$, such that \eqref{u-lower-upper-blow-up-rate} holds.
Then \eqref{u1-u2-compare} holds.
\end{thm}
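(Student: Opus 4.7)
The strategy is to reduce Theorem~\ref{uniqueness-thm2-bd-domain} to Theorem~\ref{uniqueness-thm-bd-domain} by approximating the merely-$L^\infty$ boundary data $f_j$ by $C^3$ data from above and below and passing to the limit, relying on the stricter upper bound $\gamma_i'<\frac{n-2}{m}$ to control the singularities at each $a_i$ during this passage. Choose sequences $\{f_{1,k}\},\{f_{2,k}\}\subset C^3(\1\Omega\times(0,\infty))\cap L^\infty(\1\Omega\times(0,\infty))$ with $\mu_0\le f_{1,k}\le f_1\le f_2\le f_{2,k}$, $\|f_{j,k}\|_\infty$ bounded uniformly in $k$, and $f_{1,k}\nearrow f_1$, $f_{2,k}\searrow f_2$ a.e.\ on $\1\Omega\times(0,\infty)$ (constructed by standard mollification combined with a small monotone shift). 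Let $u_{j,k}$ be the singular solution of \eqref{Dirichlet-blow-up-problem} with initial data $u_{0,j}$ and boundary data $f_{j,k}$ provided by \cite{HK2}; the construction there yields $u_{j,k}$ satisfying \eqref{eq-lower-limit-of-u-near-blow-up-points}, \eqref{u-upper-blow-up-rate} and \eqref{u-lower-bd5} with constants $C_1,C_2$ independent of $k$. Since each $f_{j,k}$ is $C^3$, Theorem~\ref{uniqueness-thm-bd-domain} applies to the pair $(u_{1,k},u_{2,k})$ and gives $u_{1,k}\le u_{2,k}$ on $\widehat{\Omega}\times(0,\infty)$ for every $k$.

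Next, pass to the limit $k\to\infty$. A second application of Theorem~\ref{uniqueness-thm-bd-domain} with $f_{1,k}\le f_{1,k+1}$ (respectively $f_{2,k}\ge f_{2,k+1}$) shows that $\{u_{1,k}\}_k$ is nondecreasing and $\{u_{2,k}\}_k$ is nonincreasing in $k$. Combined with the $k$-uniform bound \eqref{u-upper-blow-up-rate} and standard interior parabolic regularity for \eqref{fde}, this produces monotone pointwise limits $\tilde u_j:=\lim_k u_{j,k}$, locally smooth on $\widehat{\Omega}\times(0,\infty)$, with $\tilde u_1\le\tilde u_2$. The stricter hypothesis $\gamma_i'<\frac{n-2}{m}$ gives $u_{j,k}^m\le C|x-a_i|^{-m\gamma_i'}$ with $m\gamma_i'<n-2<n$, so $u_{j,k}^m$ is uniformly bounded in $L^1(B_\delta(a_i)\times(0,T))$ and, in particular, integrable against Newtonian-type test functions centered at $a_i$. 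This allows passage to the limit in the distributional formulation of \eqref{fde} with no concentration of mass at $\{a_i\}\times(0,\infty)$. The $L^1$-trace condition \eqref{initial-value-l1-sense} passes similarly via monotone convergence on each $\Omega_\delta$, so each $\tilde u_j$ is itself a singular solution of \eqref{Dirichlet-blow-up-problem} with data $(u_{0,j},f_j)$ satisfying all hypotheses of Theorem~\ref{uniqueness-thm2-bd-domain}.

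Finally, identify $\tilde u_j=u_j$ by squeezing $u_j$ between approximating sequences $f_{j,k}'\le f_j\le f_{j,k}''$ of $C^3$ data converging monotonically to $f_j$ from both sides, applying Theorem~\ref{uniqueness-thm-bd-domain} to each inequality, and letting $k\to\infty$ via the same singular-mass-control argument above; the common limit must equal $u_j$. Passing to the limit in $u_{1,k}\le u_{2,k}$ then yields \eqref{u1-u2-compare}. The principal obstacle is this last identification step: one must verify that the monotone limit of $u_{j,k}$ really is a singular solution of \eqref{Dirichlet-blow-up-problem} with the original $L^\infty$ boundary data and the prescribed blow-up profile. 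Boundary behavior is handled by the monotone approximation from both sides, but the behavior near each $a_i$ depends critically on the removable-singularity-type integrability of $u^m$, and the bound $\gamma_i'<\frac{n-2}{m}$ is tailored precisely for this: below this threshold, the profile $|x-a_i|^{-\gamma_i'}$ produces no distributional delta contribution to $\Delta u^m$, so the limiting equation continues to hold in the strong sense required by the hypotheses of Theorem~\ref{uniqueness-thm2-bd-domain}.
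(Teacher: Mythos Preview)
Your reduction to Theorem~\ref{uniqueness-thm-bd-domain} is circular at the identification step. To conclude $\tilde u_j=u_j$ you propose to squeeze the \emph{given} solution $u_j$ (whose boundary datum $f_j$ is merely $L^\infty$) between solutions with $C^3$ data $f_{j,k}'\le f_j\le f_{j,k}''$ and invoke Theorem~\ref{uniqueness-thm-bd-domain}. But that theorem requires \emph{both} competitors to have $C^3$ boundary data; it says nothing about comparing a $C^3$-data solution with an $L^\infty$-data solution. So the inequalities $u_{j,k}'\le u_j\le u_{j,k}''$ are precisely instances of the comparison principle you are trying to prove, not consequences of Theorem~\ref{uniqueness-thm-bd-domain}. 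Without them you only know that the monotone limits $\tilde u_1\le\tilde u_2$ are \emph{some} solutions with data $(u_{0,j},f_j)$; you have no link between $\tilde u_j$ and the arbitrary solutions $u_j$ in the hypothesis, and the theorem is a statement about arbitrary such solutions.

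The paper does not go through Theorem~\ref{uniqueness-thm-bd-domain} at all. It gives a direct duality proof: for $0<\delta<\delta_2$ it solves a backward linear problem $\psi_t+A_k\Delta\psi=0$ on $\Omega_\delta\times(0,t_1)$ with final datum $h\in C_c^\infty(\Omega_{\delta_2})$, and controls $\partial\psi_{k,\delta}/\partial\nu$ on each inner sphere $\partial B_\delta(a_i)$ by the harmonic barrier $q_i(x)=\|h\|_\infty\,(\delta^{2-n}-|x-a_i|^{2-n})/(\delta^{2-n}-\delta_2^{2-n})$. The resulting boundary contribution is of order $\delta^{2-n}(\delta^{2-n}-\delta_2^{2-n})^{-1}\sum_i\delta^{\,n-2-m\gamma_i'}$, and the hypothesis $\gamma_i'<\frac{n-2}{m}$ is used exactly here to make this term vanish as $\delta\to 0$. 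No $C^3$ regularity of $f_j$ is needed because the boundary term on $\partial\Omega$ already has the correct sign via $f_1\le f_2$ and $\partial\psi_{k,\delta}/\partial\nu\le 0$. Your observation that $m\gamma_i'<n-2$ controls the singular mass is morally the same threshold, but it enters the paper's argument through the barrier estimate rather than through any approximation-and-limit scheme.
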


\begin{thm}\label{uniqueness-thm-Rn}
Let $n\geq 3$, $0<m<\frac{n-2}{n}$, $0<\delta_1<\min (1,\delta_0)$, $\mu_0>0$, $R_1>R_0$ and $\mu_0\leq u_{0,1}\le u_{0,2}\in L_{loc}^p(\widehat{\R^n})$ for some constant  $p>\frac{n(1-m)}{2}$ such that
\begin{equation}\label{u0j-mu0-l1-difference}
\int_{\R^n\setminus B_{R_1}}|u_{0,j}-\mu_0|\,dx<\infty\quad\forall j=1,2.
\end{equation}
Let \eqref{2initial-value-lower-upper-bd} hold for some constants $\lambda_1$, $\cdots$, $\lambda_{i_0}$, $\lambda_1'$, $\cdots$, $\lambda_{i_0}'\in\R^+$ and
\begin{equation}\label{gamma-upper-lower-bd1}
\frac{2}{1-m}<\gamma_i\le\gamma_i'<n\quad\forall i=1,2,\dots,i_0.
\end{equation}
Suppose $u_1$, $u_2$, are the solutions of \eqref{cauchy-blow-up-problem} with $u_0=u_{0,1}, u_{0,2}$ respectively which satisfy
\begin{equation}\label{u1-2-Rn-lower-bd}
u_j(x,t)\ge\mu_0\quad\forall x\in \widehat{\R^n}, t>0, j=1,2
\end{equation}
and
\begin{equation}\label{uj-mu0-l1-difference}
\int_{\widehat{\R^n}}|u_j(x,t)-\mu_0|\,dx\le\int_{\widehat{\R^n}}|u_{0,j}-\mu_0|\,dx\quad\forall t>0, j=1,2
\end{equation}
such that for any constants $T>0$ and $\delta_2\in (0,\delta_1)$ there exist constants $C_1=C_1(T)>0$, $C_2=C_2(T)>0$, such that \eqref{u-lower-upper-blow-up-rate} holds. Then 
\begin{equation}\label{u1-u2-Rn-compare}
u_1(x,t)\le u_2(x,t)\quad\forall x\in \widehat{\R^n}, t>0.
\end{equation}
\end{thm}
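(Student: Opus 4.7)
The plan is to establish \eqref{u1-u2-Rn-compare} by an $L^1$-contraction (Kato-type) argument on $\widehat{\R^n}\times(0,\infty)$, adapting the bounded-domain proofs of Theorems~\ref{uniqueness-thm-bd-domain}--\ref{uniqueness-thm2-bd-domain} and using the integrability hypotheses \eqref{u0j-mu0-l1-difference} and \eqref{uj-mu0-l1-difference} to play the role of the Dirichlet boundary data at $\partial\Omega$. By \eqref{u1-2-Rn-lower-bd} and \eqref{u-lower-upper-blow-up-rate}, the $u_j$ are classical, locally bounded, strictly positive solutions of $u_t=\Delta u^m$ on $\widehat{\R^n}\times(0,\infty)$, and since $s\mapsto s^m$ is strictly increasing on $(0,\infty)$, $(u_1^m-u_2^m)_+$ has the same support as $(u_1-u_2)_+$; this monotonicity is the input that turns the linearized identity $(u_1-u_2)_t=\Delta(u_1^m-u_2^m)$ into a Kato inequality after testing against a nonnegative spatial cutoff.

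For $0<\delta<\delta_1$ and $R>R_0$, let $\eta_{\delta,R}\in C_c^\infty(\R^n)$ satisfy $0\le\eta_{\delta,R}\le 1$, $\eta_{\delta,R}\equiv 1$ on $\Omega_{2\delta,R/2}$, $\supp\eta_{\delta,R}\subset\Omega_{\delta,R}$, $|\Delta\eta_{\delta,R}|\le C\delta^{-2}$ on each $\{\delta<|x-a_i|<2\delta\}$, and $|\Delta\eta_{\delta,R}|\le CR^{-2}$ on $\{R/2<|x|<R\}$. Approximating $s\mapsto s_+$ by a smooth convex family and carrying out the standard Kato computation yields, for $0<\tau<t$,
\begin{equation*}
\int_{\widehat{\R^n}}(u_1-u_2)_+(\cdot,t)\,\eta_{\delta,R}\,dx\le\int_{\widehat{\R^n}}(u_1-u_2)_+(\cdot,\tau)\,\eta_{\delta,R}\,dx+\int_\tau^t\!\!\int_{\widehat{\R^n}}(u_1^m-u_2^m)_+\,|\Delta\eta_{\delta,R}|\,dx\,ds.
\end{equation*}
Near each $a_i$, \eqref{u-lower-upper-blow-up-rate} gives $u_j^m\le C|x-a_i|^{-m\gamma_i'}$, and \eqref{gamma-upper-lower-bd1} together with $m<(n-2)/n$ produces $m\gamma_i'<mn<n-2$, so the local contribution is $O(\delta^{n-2-m\gamma_i'})\to 0$ as $\delta\to 0$. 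Near infinity, the lower bound $u_j\ge\mu_0$ and the mean value theorem give $|u_1^m-u_2^m|\le m\mu_0^{m-1}(|u_1-\mu_0|+|u_2-\mu_0|)$, whence the outer contribution is dominated by $CR^{-2}\sum_j\int_{\R^n\setminus B_{R/2}}|u_j-\mu_0|\,dx$, which tends to $0$ as $R\to\infty$ by \eqref{uj-mu0-l1-difference} and \eqref{u0j-mu0-l1-difference}.

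Sending $\delta\to 0$ and then $R\to\infty$ collapses the inequality to $\int_{\widehat{\R^n}}(u_1-u_2)_+(\cdot,t)\,dx\le\int_{\widehat{\R^n}}(u_1-u_2)_+(\cdot,\tau)\,dx$ for all $0<\tau<t$; letting $\tau\to 0^+$ and using $u_{0,1}\le u_{0,2}$ would then close the argument. The main obstacle is precisely this last passage: Theorem~\ref{uniqueness-thm-Rn}, in contrast to Theorems~\ref{uniqueness-thm-bd-domain}--\ref{uniqueness-thm2-bd-domain}, does not explicitly postulate an $L^1_{\loc}$-initial-trace hypothesis, so the convergence $\int_{\widehat{\R^n}}(u_1-u_2)_+(\cdot,\tau)\,dx\to 0$ as $\tau\to 0^+$ has to be extracted from the construction of the singular solutions in \cite{HK2} (where they arise as monotone limits of ordered classical solutions on $\Omega_{\ve,R'}$), combined with the uniform integrability at infinity furnished by \eqref{u0j-mu0-l1-difference} and \eqref{uj-mu0-l1-difference}. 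A secondary, more routine difficulty is the rigorous justification of Kato's inequality in the presence of the singularities at the $a_i$; this is handled by restricting the convex-function approximation argument to $\supp\eta_{\delta,R}$, where both solutions are classical and uniformly bounded, before relaxing $\delta$ and $R$.
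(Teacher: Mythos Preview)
Your direct Kato-with-cutoffs approach works, but the ``main obstacle'' you flag is illusory and arises from a misreading together with a suboptimal order of limits. First, the $L^1_{\loc}$ initial trace \emph{is} assumed: by the definition of a solution of \eqref{cauchy-blow-up-problem} given at the end of the Introduction, \eqref{u-initial-value} holds for every compact $K\subset\widehat{\R^n}$. Second, you should send $\tau\to 0$ \emph{before} relaxing $\delta$ and $R$: since $\supp\eta_{\delta,R}$ is a compact subset of $\widehat{\R^n}$, the initial trace immediately gives $\int(u_1-u_2)_+(\tau)\,\eta_{\delta,R}\,dx\to\int(u_{0,1}-u_{0,2})_+\,\eta_{\delta,R}\,dx=0$, while your error bounds $C t\,\delta^{\,n-2-m\gamma_i'}$ and $C t R^{-2}\sum_j\int_{\widehat{\R^n}}|u_{0,j}-\mu_0|\,dx$ are already uniform in $\tau$. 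Only then pass $\delta\to 0$, $R\to\infty$; no recourse to the construction in \cite{HK2} is needed. With your stated order ($\delta,R$ first, $\tau$ last) you would instead need $\int_{\R^n\setminus B_R}(u_1-u_2)_+(\tau)\,dx$ to be small uniformly in $\tau$, and \eqref{uj-mu0-l1-difference} does not deliver that directly.

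This route differs genuinely from the paper's. The paper follows the duality scheme of Theorem~\ref{uniqueness-thm2-bd-domain}: it solves a backward linear problem for an adjoint $\psi_{k,\delta,R}$ on $\Omega_{\delta,R}$ and controls its normal derivative on the artificial boundaries via harmonic barriers ($q_i$ near each $a_i$ and $Q$ near $\partial B_R$). Since the outer barrier yields only $|\partial\psi_{k,\delta,R}/\partial\nu|\le C/R$ on $\partial B_R$, the paper must extract from \eqref{uj-mu0-l1-difference} a special sequence $R_j\to\infty$ along which the surface integrals $\int_{\partial B_{R_j}}|u_l-\mu_0|\,d\sigma$ vanish. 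Your argument is more elementary: the $R^{-2}$ coming from $|\Delta\eta_{\delta,R}|$ pairs directly with the uniform volume $L^1$ bound, so neither the adjoint equation nor any subsequence selection is required.
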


\begin{thm}\label{existence-oscillating-soln-thm}
Let $n\geq 3$, $0<m<\frac{n-2}{n}$, $0<\delta_1<\min (1,\delta_0)$ and $\mu_0>0$. There there exists $u_0\in L_{loc}^p(\2{\Omega}\setminus\{a_1,\cdots,a_{i_0}\})$ for some constant $p>\frac{n(1-m)}{2}$, $u_0\ge\mu_0$ in $\widehat{\Omega}$, such that 
\begin{equation}\label{initial-value-lower-upper-bd}
\frac{\lambda_i}{|x-a_i|^{\gamma_i}}\le u_0(x)\le \frac{\lambda_i'}{|x-a_i|^{\gamma_i'}} \qquad \forall 0<|x-a_i|<\delta_1,i=1, \cdots, i_0
\end{equation}
for some constants  satisfying \eqref{gamma-gamma'-lower-bd} and $\lambda_1$, $\cdots$, $\lambda_{i_0}$, $\lambda_1'$, $\cdots$, $\lambda_{i_0}'\in\R^+$  such that 
\begin{equation}\label{Dirichlet-blow-up-problem-constant-bdary-value}
\begin{cases}
\begin{aligned}
u_t&=\La u^m \quad \mbox{in $\widehat{\Omega}\times(0,\infty)$}\\
u&=\mu_0\qquad \mbox{ on $\partial\Omega\times(0,\infty)$}\\
u(a_i,t)&=\infty \qquad \forall t>0, i=1,\cdots,i_0\\
u(x,0)&=u_0(x) \quad \mbox{in $\widehat{\Omega}$}
\end{aligned}
\end{cases}
\end{equation} has a unique solution $u$ with the property that $u$ oscillates between $\mu_0$ and infinity as $t\to\infty$.
\end{thm}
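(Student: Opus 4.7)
The plan is to split the proof into uniqueness and explicit construction. For uniqueness, once $u_0$ with the claimed properties is identified, existence of a solution of \eqref{Dirichlet-blow-up-problem-constant-bdary-value} satisfying \eqref{eq-lower-limit-of-u-near-blow-up-points}--\eqref{u-upper-blow-up-rate} follows immediately from the results of \cite{HK2} recalled in the introduction, and uniqueness follows from Theorem \ref{uniqueness-thm-bd-domain} applied with $u_{0,1}=u_{0,2}=u_0$ and $f_1=f_2\equiv\mu_0$. The entire content of the theorem therefore lies in constructing a single $u_0$ whose solution oscillates at some interior point $x_0\in\widehat\Omega$ bounded away from $\{a_1,\dots,a_{i_0}\}\cup\partial\Omega$.

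The construction adapts the scheme of J.L.~Vazquez and M.~Winkler in \cite{VW2} from the single-point case to the finite-points setting. Two ingredients would be combined. First, a decay-to-background statement: every admissible solution $v$ of \eqref{Dirichlet-blow-up-problem-constant-bdary-value} with initial data $v_0\ge\mu_0$ satisfying \eqref{initial-value-lower-upper-bd} obeys $v(\cdot,t)\to\mu_0$ uniformly on compact subsets of $\widehat\Omega$ as $t\to\infty$; this is obtained by comparing with the stationary solution $\mu_0$ and an auxiliary separable-variables super-solution, using the quantitative bounds from \cite{HK2}. Second, an amplification statement: for any $H>0$ one can exhibit an admissible datum $v_0^{(H)}$, for instance a cut-off profile of amplitude $A(H)$ supported near $\bigcup_i B_{r(H)}(a_i)$ matched to $\mu_0$ outside, whose solution exceeds $H$ at $x_0$ on a prescribed time window before the first statement takes effect; this reduces to standard $L^p$-smoothing and local propagation estimates. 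I would then pick sequences $0<t_1<s_1<t_2<s_2<\cdots\nearrow\infty$, amplitudes $A_k\nearrow\infty$ and radii $r_k\searrow 0$ with $A_k r_k^{\gamma_i'-\gamma_i}$ uniformly bounded (so that the envelope \eqref{initial-value-lower-upper-bd} is preserved), define the cut-off singular profiles $\varphi_k$ with parameters $(A_k,r_k)$, and set $u_0:=\sup_k\varphi_k$. By comparison the resulting solution $u$ dominates each $W_k$ launched from $\varphi_k$, so tuning $A_k,r_k$ via the amplification statement gives the peaks $u(x_0,t_k)>k$.

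The troughs $u(x_0,s_k)\to\mu_0$ are then forced by applying the decay statement to the finite sub-ensemble $\sup_{j\le k}\varphi_j$ and choosing $s_k$ accordingly, provided the tail bumps $\{\varphi_j\}_{j>k}$ have not yet reached $x_0$ by time $s_k$. Orchestrating (i) the envelope condition \eqref{initial-value-lower-upper-bd} on $u_0$, (ii) arbitrarily tall peaks at $(x_0,t_k)$, and (iii) arbitrarily small troughs at $(x_0,s_k)$ is the main obstacle: (i) and (ii) pull $A_k$ and $r_k$ in opposite directions, while (iii) demands uniform-in-$k$ quantitative control on the propagation speed of highly concentrated singular data in the fast diffusion regime, together with a careful matching of the time scales $t_k-s_{k-1}$ and $s_k-t_k$ to the parameters $(A_k,r_k)$. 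Once these propagation and decay estimates are in place with the right dependence on the parameters, the rest is pure comparison and an application of Theorem \ref{uniqueness-thm-bd-domain} to identify the constructed solution with the unique one of \eqref{Dirichlet-blow-up-problem-constant-bdary-value}.
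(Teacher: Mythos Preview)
Your uniqueness argument is fine. The construction, however, has a genuine gap exactly where you flag it: you never establish the ``amplification statement'' or the matching propagation/decay estimates, and you correctly note that orchestrating (i)--(iii) simultaneously is the crux. In the regime $0<m<\frac{n-2}{n}$ there is no finite propagation speed, so controlling how quickly the tail bumps $\{\varphi_j\}_{j>k}$ reach $x_0$ --- which you need for the troughs --- is not available for free, and producing arbitrarily high peaks from data squeezed inside the fixed envelope \eqref{initial-value-lower-upper-bd} is equally unproven. As written your scheme is a program, not a proof.

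The paper avoids these difficulties by a different mechanism. The peaks are not obtained from large amplitudes but from \emph{steep singularity exponents}: by Theorem~\ref{convergence-thm6}, if the datum blows up near $a_1$ like $|x-a_1|^{-\alpha_1}$ with $\alpha_1>\frac{n-2}{m}$, then the solution tends to $+\infty$ on every compact subset of $\widehat\Omega$ as $t\to\infty$. The troughs come from Theorem~\ref{convergence-thm1}: if all exponents lie in $\bigl(\frac{2}{1-m},n\bigr)$ the solution tends to $\mu_0$. One builds a sequence $u_{0,k}$ that agrees with $u_{0,k-1}$ outside $B_{1/j_k}(a_1)$ and alternates the exponent at $a_1$ between a fixed $\alpha_2\in\bigl(\frac{2}{1-m},n\bigr)$ and a fixed $\alpha_1>\frac{n-2}{m}$; the limit $u_0$ is defined layer by layer. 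The glue is a stability result (Lemma~\ref{stability-thm-bd-domain}): $u_{0,j}\to u_0$ in $L^p_{\mathrm{loc}}$ implies $u_j\to u$ in $C^{2,1}$ on compact space--time sets. Since each $u_k$ has a definite large-time limit by the dichotomy above, one first picks $t_k$ so that $u_k(\cdot,t_k)$ is close to $\mu_0$ (odd $k$) or large (even $k$), and only then chooses $j_{k+1}$ so large that the next modification perturbs the solution at times $t_1,\dots,t_k$ by at most $2^{-k}$. This delivers $u(x,t_{2l-1})\le\mu_0+2^{1-l}$ and $u(x,t_{2l})\ge 2l$ without any propagation-speed argument.
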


The plan of the paper is as follows. In section 2 we will prove the uniqueness of singular solutions of \eqref{Dirichlet-blow-up-problem} and \eqref{cauchy-blow-up-problem}. In section 3 we will prove the existence of initial data such that the corresponding solution of \eqref{Dirichlet-blow-up-problem} with $f=\mu_0>0$ oscillates between infinity and some positive constant as $t\to\infty$.

We start with some definitions. For any $0\leq f\in L^{\infty}(\partial\Omega\times(0,\infty))$ and $0\le u_0\in L^1_{loc}(\widehat{\Omega})$, we say that $u$ is a solution of 
\begin{equation}\label{fde-Dirichlet-blow-up-problem}
\begin{cases}
\begin{aligned}
u_t&=\La u^m \qquad \mbox{in $\widehat{\Omega}\times(0,\infty)$}\\
u&=f \qquad\quad \mbox{on $\partial\Omega\times(0,\infty)$}\\
u(x,0)&=u_0(x) \quad\,\,\, \mbox{in $\widehat{\Omega}$}
\end{aligned}
\end{cases}
\end{equation}
if $u\in L_{loc}^{\infty}(\2{\Omega}\setminus\{a_1,\cdots,i_0\}\times (0,\infty))$ is positive in $\widehat{\Omega}\times(0,\infty)$ and satisfies \eqref{fde} in $\widehat{\Omega}\times(0,\infty)$  in the classical sense with
\begin{equation}\label{u-initial-value}
\|u(\cdot,t)-u_0\|_{L^1(K)}\to 0\quad \mbox{ as }t\to 0 
\end{equation}
for any compact set $K\subset\widehat{\Omega}$ and
\begin{equation}\label{eq-formula-for-weak-sols-of-main-problem}
\begin{aligned}
&\int_{t_1}^{t_2}\int_{\widehat{\Omega}}\left(u\eta_t+u^m\La\eta\right)\,dxdt\\
&\qquad \qquad =\int_{t_1}^{t_2}\int_{\partial\Omega}f^m\frac{\partial\eta}{\partial\nu}\,d\sigma dt+\int_{\widehat{\Omega}}u(x,t_2)\eta(x,t_2)\,dx-\int_{\widehat{\Omega}}u(x,t_1)\eta(x,t_1)\,dx
\end{aligned}
\end{equation}
 for any $t_2>t_1>0$ and $\eta\in C_c^2((\overline{\Omega}\bs\left\{a_1,\cdots,a_{i_0}\right\})\times(0,\infty))$ satisfying $\eta\equiv 0$ on $\partial\Omega\times(0,T)$.  We say that $u$ is a solution  of \eqref{Dirichlet-blow-up-problem} if $u$ is a solution of \eqref{fde-Dirichlet-blow-up-problem} and satisfies 
\begin{equation}\label{u-blow-up-infty}
u(x,t)\to\infty\quad\mbox{ as }x\to a_i\quad\forall t>0,i=1,\cdots,i_0. 
\end{equation}

For any $0\le u_0\in L^1_{loc}(\widehat{\R^n})$ we say that $u$ is a solution  of \eqref{cauchy-blow-up-problem} if $u\in L_{loc}^{\infty}(\widehat{\R^n}\times(0,\infty))$ is positive in $\widehat{\R^n}\times(0,\infty)$ and satisfies \eqref{fde} in $\widehat{\R^n}\times(0,\infty)$  in the classical sense and \eqref{u-initial-value}, \eqref{u-blow-up-infty},  hold for any compact set $K\subset\widehat{\R^n}$. 

For any set $A\subset\R^n$, we let $\chi_A$ be the characteristic function of the set $A$. For any $a\in\R$, we let $a_+=\max (0,a)$.

\section{Uniqueness of solution}
\setcounter{equation}{0}
\setcounter{thm}{0} 

In this section we will prove the uniqueness of singular solutions of \eqref{Dirichlet-blow-up-problem} and \eqref{cauchy-blow-up-problem}.

\begin{lem}\label{u-local-l1-bd-intial-value-lem}
Let $n\geq 3$, $0<m<\frac{n-2}{n}$, $0<\delta_1<\delta_0$, $0\le f\in L^{\infty}(\partial\Omega\times[0,\infty))$ and $0\leq u_0\in L_{loc}^p(\2{\Omega}\setminus\{a_1,\cdots,a_{i_0}\})$ for some constant  $p>\frac{n(1-m)}{2}$ be such that \eqref{u0-lower-blow-up-rate}
holds for some constants $\lambda_1$, $\cdots$, $\lambda_{i_0}\in\R^+$ and $\gamma_1,\cdots,\gamma_{i_0}\in \left(\frac{2}{1-m},\infty\right)$. Let $u$ be the  solution of \eqref{Dirichlet-blow-up-problem} constructed in Theorem 1.1 of \cite{HK2}. Then there exists a constant $C>0$ such that 
\begin{equation}\label{u-integral-bd-by-time-0-integral}
\int_{D_{\delta}}u(x,t)\,dx
\le\left\{\left(\int_{D_{2\delta}}u_0\,dx\right)^{1-m}+Ct\right\}^{\frac{1}{1-m}}+|D_{\delta}|\|f\|_{L^{\infty}}\quad\forall t>0, 0<\delta<\delta_0/2
\end{equation}
holds. 
\end{lem}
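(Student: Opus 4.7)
My plan is to reduce the bound on $\int_{D_\delta} u\,dx$ to a differential inequality for the integral of $(u - M)_+$ against a suitable cutoff, where $M := \|f\|_{L^\infty(\partial\Omega\times[0,\infty))}$. The key observations are: (i) since $\delta < \delta_0/2$, all blow-up points $a_1,\ldots,a_{i_0}$ lie in $\Omega\setminus\overline{D_{2\delta}}$, so $u$ is a classical solution on $\overline{D_{2\delta}}\times(0,\infty)$; (ii) since $u = f \le M$ on $\partial\Omega$, the function $(u^m - M^m)_+$ vanishes on $\partial\Omega$; and (iii) for $0 < m < 1$, subadditivity gives the pointwise estimate $(u^m - M^m)_+ \le (u - M)_+^m$.

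I would first select a cutoff $\phi \in C^\infty(\overline{\Omega})$ with $0 \le \phi \le 1$, $\phi \equiv 1$ on $\overline{D_\delta}$, $\supp\phi \subset \overline{D_{2\delta}}$, and both $\phi$ and $\nabla\phi$ vanishing along $\Gamma_{2\delta} := \{x\in\Omega:\mathrm{dist}(x,\partial\Omega) = 2\delta\}$, constructed so that the weighted integral $\int_{\Omega}\phi^{-m/(1-m)}|\Delta\phi|^{1/(1-m)}\,dx$ is finite (taking $\phi$ of the form $\psi(\mathrm{dist}(x,\partial\Omega)/\delta)$ with $\psi$ vanishing to sufficient polynomial order at $s=2$ does the job). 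Setting $v := (u - M)_+$ and $w := (u^m - M^m)_+$, Kato's inequality yields $\partial_t v \le \Delta w$ on $\widehat{\Omega}\times(0,\infty)$. Multiplying by $\phi$ and applying Green's identity, the inner boundary contribution along $\Gamma_{2\delta}$ vanishes by the choice of $\phi$, the boundary term $\int_{\partial\Omega}w\,\partial_\nu\phi\,d\sigma$ vanishes since $w = 0$ on $\partial\Omega$, and $\int_{\partial\Omega}\phi\,\partial_\nu w\,d\sigma \le 0$ since $w \ge 0$ inside $\Omega$ with zero boundary trace and $\phi \ge 0$. Combining this with the pointwise bound $w \le v^m$ and Hölder's inequality with exponents $1/m$ and $1/(1-m)$ yields, for $J(t) := \int_\Omega v(\cdot,t)\phi\,dx$,
\begin{equation*}
J'(t) \le \Bigl(\int_\Omega v\phi\,dx\Bigr)^m \Bigl(\int\phi^{-m/(1-m)}|\Delta\phi|^{1/(1-m)}\,dx\Bigr)^{1-m} = C_0\,J(t)^m.
\end{equation*}

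Integrating the ODE gives $J(t)^{1-m} \le J(0)^{1-m} + (1-m)C_0 t$. Since $J(0) \le \int_{D_{2\delta}}u_0\,dx$ and $J(t) \ge \int_{D_\delta}(u-M)_+\,dx$ (because $\phi \equiv 1$ on $D_\delta$), I obtain
\begin{equation*}
\int_{D_\delta}(u-M)_+\,dx \le \Bigl\{\Bigl(\int_{D_{2\delta}}u_0\,dx\Bigr)^{1-m} + Ct\Bigr\}^{1/(1-m)};
\end{equation*}
adding the trivial estimate $\int_{D_\delta}\min(u,M)\,dx \le M|D_\delta|$ completes the claim. The main technical obstacle is the rigorous justification of Kato's inequality together with the boundary argument, since $u$ is only known to attain $f$ on $\partial\Omega$ via the weak formulation \eqref{eq-formula-for-weak-sols-of-main-problem}. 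I would therefore carry out the derivation first for the smooth approximating solutions used to construct $u$ in \cite{HK2} (where matching smooth boundary data is prescribed), verify each step classically for those approximations, and pass to the limit at the end using the stated $L^1_{\mathrm{loc}}$ convergence of $u_0$ and the uniform bounds on the approximations.
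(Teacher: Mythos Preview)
Your proposal is correct and follows essentially the same route as the paper's proof: a cutoff $\eta$ supported in $\overline{D_{2\delta}}$ with $\eta\equiv 1$ on $D_\delta$ and $\int_\Omega(\eta^{-m}|\Delta\eta|)^{1/(1-m)}\,dx<\infty$, Kato's inequality applied to $(u-k)_+$, the pointwise bound $(u^m-k^m)_+\le C(u-k)_+^m$, H\"older with exponents $1/m$ and $1/(1-m)$, integration of the resulting ODE, and passage to the limit through the smooth approximations $u_{\varepsilon,M}$ from \cite{HK2}. The only cosmetic differences are that the paper realizes the finiteness of the weighted integral by writing $\eta=\phi^\alpha$ with $\alpha>2/(1-m)$ (your ``polynomial order of vanishing'' does the same job), and that the paper works with a level $k>\|f\|_{L^\infty}+\varepsilon$ at the approximation stage---since the approximants carry boundary data $f+\varepsilon$---and only lets $k\downarrow\|f\|_{L^\infty}$ at the end; you should make the same adjustment when you pass through the approximations, but otherwise nothing changes.
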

\begin{proof}
We will use a modification of the proof of Theorem 2.2 of \cite{HP} to prove this lemma.  For any $0<\3<1$, let  $k>\|f\|_{L^{\infty}}+\3$. Let $0<\delta<\delta_0/2$. We choose $\phi\in C^{\infty}(\2{\Omega})$ such that $0\le\phi\le 1$ in
$\2{\Omega}$, $\phi (x)=0$ for any $x\in\Omega\setminus D_{2\delta}$ and $\phi (x)=1$ for any $x\in \2{D_{\delta}}$. Let $\alpha>\frac{2}{1-m}$ and $\eta(x)=\phi(x)^{\alpha}$. Then by direct computation,
\begin{equation*}
C_{\eta}:=\left(\int_{\Omega}(\eta^{-m}|\Delta\eta|)^{\frac{1}{1-m}}\,dx\right)^{1-m}<\infty.
\end{equation*}
For any $0<\3<1$ and $M>0$, let
\begin{equation*}
\left\{\begin{aligned}
u_{0,M}(x)=&\min(u_0(x),M)\\
u_{0,\3,M}(x)=&\min(u_0(x),M)+\3\\
f_{\3}(x,t)=&f(x,t)+\3\quad \forall (x,t)\in\partial\Omega\times(0,\infty)  
\end{aligned}\right.
\end{equation*}
and let $u_M$ and $u_{\3,M}$ be the solutions of 
\begin{equation*}
\begin{cases}
\begin{aligned}
u_t&=\La u^m \qquad  \mbox{in $\Omega\times(0,\infty)$}\\
u(x,t)&=f_{\3} \qquad\,\,\, \mbox{ on $\partial\Omega\times(0,\infty)$}\\
u(x,0)&=u_{0,\3,M}  \quad \mbox{ in $\Omega$}.
\end{aligned}
\end{cases}
\end{equation*}
and
\begin{equation*}
\begin{cases}
\begin{aligned}
u_t&=\La u^m \quad\mbox{ in }\Omega\times (0,T_M)\\
u(x,t)&=f \quad\quad \mbox{ on $\partial\Omega\times(0,T_M)$}\\
u(x,0)&=u_{0,M} \quad \mbox{in $\Omega$}
\end{aligned}
\end{cases}
\end{equation*}
respectively constructed in \cite{HK2} for some maximal time $T_M>0$ of existence. By the result in \cite{HK2} $T_M\to\infty$ as $M\to\infty$. Moreover $u_{\3,M}$ decreases and converges to $u_M$ in $\Omega\times (0,T_M)$ uniformly in $C^{2,1}(K)$ for every compact subset $K$ of $\Omega$ as $\3\to 0$ and $u_M$ increases and converges to $u$ in $\widehat{\Omega}\times (0,\infty)$ uniformly in $C^{2,1}(K)$ for every compact subset $K$ of $\widehat{\Omega}$ as $M\to\infty$. By approximation we may assume without loss of generality that $u_{\3,M}\in C^2(\2{\Omega}\times (0,\infty))$. 
Then by the Kato inequality (\cite{DK}, \cite{K}),
\begin{align}\label{u-integral-ineqn}
\frac{\1}{\1 t}\left(\int_{\Omega}(u_{\3,M}-k)_+\eta\,dx\right)\le&\int_{\Omega}(u_{\3,M}^m-k^m)_+\Delta\eta\,dx\notag\\
\le&C\int_{\Omega}(u_{\3,M}-k)_+^m|\Delta\eta|\,dx\notag\\
\le&C\left(\int_{\Omega}(u_{\3,M}-k)_+\eta\,dx\right)^m\left(\int_{\Omega}(\eta^{-m}|\Delta\eta|)^{\frac{1}{1-m}}\,dx\right)^{1-m}\notag\\
=&C_1\left(\int_{\Omega}(u_{\3,M}-k)_+\eta\,dx\right)^m\quad\forall t>0, 0<\3<1, M>0
\end{align}
for some constants $C>0$, $C_1>0$. Integrating \eqref{u-integral-ineqn} over $(0,t)$ and letting $\3\to 0$, $M\to\infty$ and $k\to \|f\|_{L^{\infty}}$,
\begin{equation*}
\int_{D_{\delta}}(u(x,t)-\|f\|_{L^{\infty}})_+\,dx
\le\left\{\left(\int_{D_{2\delta}}(u_0-\|f\|_{L^{\infty}})_+\,dx\right)^{1-m}+C_1(1-m)t\right\}^{\frac{1}{1-m}}\quad\forall t>0
\end{equation*}
and \eqref{u-integral-bd-by-time-0-integral} follows.
\end{proof}

\begin{prop}\label{u-go-to-u0-l1-5}
Let $n\geq 3$, $0<m<\frac{n-2}{n}$, $0<\delta_1<\delta_0$, $0\le f\in L^{\infty}(\partial\Omega\times[0,\infty))$ and $0\leq u_0\in L_{loc}^p(\2{\Omega}\setminus\{a_1,\cdots,a_{i_0}\})$ for some constant  $p>\frac{n(1-m)}{2}$ be such that \eqref{u0-lower-blow-up-rate}
holds for some constants $\lambda_1$, $\cdots$, $\lambda_{i_0}\in\R^+$ and $\gamma_1,\cdots,\gamma_{i_0}\in \left(\frac{2}{1-m},\infty\right)$. Let $u$ be the  solution of \eqref{Dirichlet-blow-up-problem} constructed in Theorem 1.1 of \cite{HK2}. Then 
\begin{equation}\label{initial-value-l1-sense2}
\|u(\cdot,t)-u_0\|_{L^1(\Omega_{\delta})}\to 0\quad\mbox{ as }t\to 0\quad\forall 0<\delta<\delta_0.
\end{equation} 
\end{prop}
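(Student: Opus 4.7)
The plan is to split the set $\Omega_\delta$ into a compact interior piece (where the required $L^1$ convergence is already built into the definition of solution) and a thin boundary strip $D_\eta$ (where one has no direct initial trace control, but where the total mass of $u(\cdot,t)$ can be bounded uniformly as $t\to 0^+$ by Lemma~\ref{u-local-l1-bd-intial-value-lem}). Concretely, for any $\eta\in(0,\delta_0)$, the definition of $\delta_0$ gives $\mathrm{dist}(a_i,\partial\Omega)\ge 3\delta_0$, so for $\eta$ small enough (say $\eta<\min(\delta,\delta_0)/2$) one has $D_{2\eta}\cap\bigcup_i B_\delta(a_i)=\emptyset$. In particular $D_{2\eta}\subset\Omega_\delta$ and $\Omega_\delta=(\Omega_\delta\setminus D_\eta)\cup D_\eta$.

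On the interior piece, the closure $\overline{\Omega_\delta\setminus D_\eta}$ is a compact subset of $\widehat{\Omega}$ (bounded away both from $\partial\Omega$ and from every $a_i$). Since $u$, by virtue of being the solution of \eqref{Dirichlet-blow-up-problem} constructed in \cite{HK2}, is in particular a solution in the sense of \eqref{fde-Dirichlet-blow-up-problem}, the defining property \eqref{u-initial-value} gives
\[
\lim_{t\to 0^+}\|u(\cdot,t)-u_0\|_{L^1(\Omega_\delta\setminus D_\eta)}=0.
\]

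For the boundary strip, I apply Lemma~\ref{u-local-l1-bd-intial-value-lem} with $\delta$ replaced by $\eta$ to obtain
\[
\int_{D_\eta}u(x,t)\,dx\le \Bigl\{\Bigl(\int_{D_{2\eta}}u_0\,dx\Bigr)^{1-m}+Ct\Bigr\}^{\frac{1}{1-m}}+|D_\eta|\,\|f\|_{L^{\infty}}.
\]
Since $u_0\in L_{loc}^p(\2{\Omega}\setminus\{a_1,\cdots,a_{i_0}\})\subset L^1(D_{2\eta})$ (because $D_{2\eta}$ is a compact subset of $\2\Omega\setminus\{a_1,\cdots,a_{i_0}\}$ for small $\eta$), absolute continuity of the Lebesgue integral yields $\int_{D_{2\eta}}u_0\,dx\to 0$ and $|D_\eta|\to 0$ as $\eta\to 0$. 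Passing to $\limsup_{t\to 0^+}$ in the above and combining with the triangle inequality and $\int_{D_\eta}u_0\,dx\le\int_{D_{2\eta}}u_0\,dx$, I get
\[
\limsup_{t\to 0^+}\int_{D_\eta}|u(x,t)-u_0(x)|\,dx\le 2\int_{D_{2\eta}}u_0\,dx+|D_\eta|\,\|f\|_{L^{\infty}},
\]
and this bound tends to $0$ as $\eta\to 0$. Given $\varepsilon>0$, I first fix $\eta$ small enough to force the right-hand side below $\varepsilon/2$, and then choose $t>0$ small enough to bound the interior contribution by $\varepsilon/2$; this yields \eqref{initial-value-l1-sense2}.

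The only delicate step is controlling the mass of $u(\cdot,t)$ in the boundary strip uniformly as $t\to 0^+$ without any pointwise initial trace information near $\partial\Omega$, and this is exactly what Lemma~\ref{u-local-l1-bd-intial-value-lem} is designed to deliver. Once the geometric splitting is arranged so that $D_{2\eta}$ sits inside $\Omega_\delta$ (which we ensure via $\mathrm{dist}(a_i,\partial\Omega)\ge 3\delta_0$), the remainder of the argument is bookkeeping.
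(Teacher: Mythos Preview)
Your proof is correct and follows essentially the same approach as the paper: decompose $\Omega_\delta$ into a compact interior piece $\Omega_\delta\setminus D_\eta$ (where \eqref{u-initial-value} applies directly) and a thin boundary strip $D_\eta$ (where Lemma~\ref{u-local-l1-bd-intial-value-lem} controls the mass of $u(\cdot,t)$), then let $t\to 0$ followed by $\eta\to 0$. Your writeup is slightly more explicit about the geometric verification that $D_{2\eta}\subset\Omega_\delta$, but otherwise the argument is the same as the paper's.
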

\begin{proof}
Let $0<\delta<\delta_0$ and $0<\delta'<\delta_0/2$. Then by Lemma \ref{u-local-l1-bd-intial-value-lem},
\begin{align}\label{u-l1-t-go-to-0}
\|u(\cdot,t)-u_0\|_{L^1(\Omega_{\delta})}
\le&\|u(\cdot,t)-u_0\|_{L^1(\Omega_{\delta}\setminus D_{\delta'})}
+\|u(\cdot,t)\|_{L^1(D_{\delta'})}+\|u_0\|_{L^1(D_{\delta'})}\notag\\
\le&\|u(\cdot,t)-u_0\|_{L^1(\Omega_{\delta}\setminus D_{\delta'})}
+\left(\|u_0\|_{L^1(D_{2\delta'})}^{1-m}+Ct\right)^{\frac{1}{1-m}}+|D_{\delta'}|\|f\|_{L^{\infty}}+\|u_0\|_{L^1(D_{\delta'})}.
\end{align}
Letting $t\to 0$ in \eqref{u-l1-t-go-to-0},
\begin{align*}
&\limsup_{t\to 0}\|u(\cdot,t)-u_0\|_{L^1(\Omega_{\delta})}
\le |D_{\delta'}|\|f\|_{L^{\infty}}+2\|u_0\|_{L^1(D_{2\delta'})}\quad\forall
0<\delta'<\delta_0/2\notag\\
\Rightarrow\quad&\lim_{t\to 0}\|u(\cdot,t)-u_0\|_{L^1(\Omega_{\delta})}=0\quad\mbox{ as }\delta'\to 0
\end{align*}
and \eqref{initial-value-l1-sense2} follows.
\end{proof}

\begin{proof}[\textbf{Proof of Theorem \ref{uniqueness-thm-bd-domain}}:]
We will use a modification of the proof of Theorem 6 of \cite{VW2} to prove the theorem.
Let $0<\delta<\delta_0$ and $t_1>t_0>0$. Since $u_1, u_2\in L_{loc}^{\infty}(\2{\Omega}\setminus\{a_1,\cdots,i_0\}\times (0,\infty))$ there exists a constant $M_1>0$ such that
\begin{equation}\label{u1-2-local-upper-bd}
u_j(x,t)\le M_1\quad\forall x\in\Omega_{\delta},t_0\le t\le t_1,j=1,2.
\end{equation}
By \eqref{u1-2-lower-bd} and \eqref{u1-2-local-upper-bd}, the equation \eqref{fde} for $u_1$ and $u_2$ are uniformly parabolic on every compact subset of $\2{\Omega}\setminus\{a_1,\cdots,i_0\}\times (0,\infty)$. Hence by the parabolic Schauder estimates \cite{LSU}, $u_1, u_2\in C^{2,1}(\2{\Omega}\setminus\{a_1,\cdots,i_0\}\times (0,\infty))$. 

We choose a nonnegative monotone increasing function $\phi\in C^{\infty}(\R)$ such that $\phi(s)=0$ for any $s\le 1/2$ and $\phi(s)=1$ for any $s\ge 1$. For any $0<\delta<\delta_0$, let $\phi_{\delta}(x)=\phi (|x|/\delta)$. Then $|\nabla\phi_{\delta}|\le C/\delta$ and $|\Delta\phi_{\delta}|\le C/\delta^2$. Let $\alpha>\max (2+n,\gamma_1,\gamma_2,\cdots,\gamma_{i_0})-n$. We choose $0<\psi\in C^{\infty}(\2{\Omega}\setminus\{a_1,\cdots, a_{i_0}\})$ such that 
$\psi (x)=|x-a_i|^{\alpha}$ for any $x\in\cup_{i=1}^{i_0}B_{\delta_0}(a_i)$. Let $\delta_2=\delta_1/2$ and $T>0$. Then there exists a constant $c_1>0$ such that 
\begin{equation}\label{psi+ve}
\psi (x)\ge c_1\quad\forall x\in \2{\Omega}\setminus \cup_{i=1}^{i_0}B_{\delta_2}(a_i).
\end{equation} 
By \eqref{u-lower-upper-blow-up-rate} and the choice of $\alpha$, for any $i=1,\cdots,i_0$, 
\begin{align}\label{u1-u2-blow-neighd-l1-bd}
\int_{B_{\delta_2}(a_i)}|x-a_i|^{\alpha}(u_1-u_2)_+(x,t)\,dx\le&C_T\int_0^{\delta_2}\rho^{\alpha+n-\gamma_i-1}\,d\rho=C_T'\delta_2^{\alpha+n-\gamma_i}<\infty\quad\forall 0<t<T
\end{align}
for some constants $C_T>0$, $C_T'>0$. Since $u_1,u_2\in L_{loc}^{\infty}(\2{\Omega}\setminus\{a_1,\cdots,i_0\}\times (0,\infty))$, by \eqref{initial-value-l1-sense} and \eqref{u1-u2-blow-neighd-l1-bd} for any $T>0$ there exists a constant $C_0(T)>0$ such that
\begin{equation}\label{u1-u2-l1-with-psi-bd}
\int_{\widehat{\Omega}}\psi (x)(u_1-u_2)_+(x,t)\,dx\le C_0(T)<\infty\quad\forall 0<t<T.
\end{equation}
Let
\begin{equation*}
w_{\delta}(x)=\Pi_{i=1}^{i_0}\phi_{\delta}(x-a_i).
\end{equation*}
By the Kato inequality (\cite{DK}, \cite{K}) for any $0<\delta<\delta_1$, $t>0$,
\begin{align}\label{u1-2-difference-in-l1-time-derivative-ineqn}
&\frac{\1}{\1 t}\left(\int_{\widehat{\Omega}}(u_1-u_2)_+\psi w_{\delta}\,dx\right)\notag\\
\le&\int_{\widehat{\Omega}}(u_1^m-u_2^m)_+\Delta (\psi w_{\delta})\,dx\notag\\
=&\int_{\widehat{\Omega}}\left\{w_{\delta}\Delta \psi+2\nabla\psi\cdot\nabla w_{\delta}+\psi\Delta w_{\delta}\right\}(u_1^m-u_2^m)_+\,dx\notag\\
\le&\int_{\widehat{\Omega}}(u_1^m-u_2^m)_+w_{\delta}\Delta \psi\,dx+\frac{C}{\delta}\sum_{i=1}^{i_0}\int_{\delta/2\le |x-a_i|\le \delta}|x-a_i|^{\alpha-1}(u_1^m-u_2^m)_+(x,t)\,dx\notag\\
&\qquad +\frac{C}{\delta^2}\sum_{i=1}^{i_0}\int_{\delta/2\le |x-a_i|\le \delta}|x-a_i|^{\alpha}(u_1^m-u_2^m)_+(x,t)\,dx\notag\\
\le&\int_{\widehat{\Omega}}(u_1^m-u_2^m)_+w_{\delta}\Delta \psi\,dx+C\sum_{i=1}^{i_0}\int_{\delta/2\le |x-a_i|\le \delta}|x-a_i|^{\alpha-2}(u_1^m-u_2^m)_+(x,t)\,dx.
\end{align}
By direct computation,
\begin{equation}\label{laplacian-norm-power}
\Delta\psi=\Delta |x-a_i|^{\alpha}=\alpha (\alpha +n-2)|x-a_i|^{\alpha-2}\quad\forall x\in\widehat{B_{\delta_0}}(a_i),i=1,\cdots,i_0.
\end{equation}
By \eqref{u1-2-lower-bd} and the mean value theorem,
\begin{equation}\label{u1-u2-difference0}
(u_1^m-u_2^m)_+(x,t)\le m\mu_0^{m-1}(u_1-u_2)_+(x,t)\quad\forall x\in \widehat{\Omega}, t>0.
\end{equation}
By \eqref{psi+ve}, \eqref{u1-2-difference-in-l1-time-derivative-ineqn}, \eqref{laplacian-norm-power} and \eqref{u1-u2-difference0},
\begin{align}\label{u1-2-difference-in-l1-time-derivative-ineqn2}
&\frac{\1}{\1 t}\left(\int_{\widehat{\Omega}}(u_1-u_2)_+\psi w_{\delta}\,dx\right)\notag\\
\le&C\int_{\Omega\setminus\cup_{i=1}^{i_0}B_{\delta_2}(a_i)}(u_1^m-u_2^m)_+(x,t)\,dx
+C\int_{\cup_{i=1}^{i_0}B_{\delta_2}(a_i)}|x-a_i|^{\alpha-2}(u_1^m-u_2^m)_+(x,t)\,dx\notag\\
\le&C\int_{\widehat{\Omega}}(u_1-u_2)_+(x,t)\psi(x)\,dx
+C\int_{\cup_{i=1}^{i_0}B_{\delta_2}(a_i)}|x-a_i|^{\alpha-2}(u_1^m-u_2^m)_+(x,t)\,dx.
\end{align}
By \eqref{u-lower-upper-blow-up-rate} and the mean value theorem for any  $|x-a_i|\le\delta_2$, $0<t<T$, $i=1,\cdots,i_0$,
\begin{align}\label{u1-u2-difference-local-ineqn}
|x-a_i|^{\alpha-2}(u_1^m-u_2^m)_+(x,t)\le &m|x-a_i|^{\alpha-2}u_2(x,t)^{m-1}(u_1-u_2)_+(x,t)\notag\\
\le &mC_1(T)^{m-1}|x-a_i|^{(1-m)\gamma_i-2+\alpha}(u_1-u_2)_+(x,t)\notag\\
\le &mC_1(T)^{m-1}\delta_0^{(1-m)\gamma_i-2}|x-a_i|^{\alpha}(u_1-u_2)_+(x,t)\notag\\
\le &mC_1(T)^{m-1}\delta_0^{(1-m)\gamma_i-2}\psi(x)(u_1-u_2)_+(x,t).
\end{align}
By \eqref{u1-2-difference-in-l1-time-derivative-ineqn2} and \eqref{u1-u2-difference-local-ineqn},
\begin{equation}\label{u1-2-difference-in-l1-time-derivative-ineqn3}
\frac{\1}{\1 t}\left(\int_{\widehat{\Omega}}(u_1-u_2)_+\psi w_{\delta}\,dx\right)
\le C_T\int_{\widehat{\Omega}}(u_1-u_2)_+(x,t)\psi(x)\,dx. 
\end{equation}
Integrating \eqref{u1-2-difference-in-l1-time-derivative-ineqn3} over $(0,t)$, by \eqref{initial-value-l1-sense} and \eqref{u1-u2-l1-with-psi-bd},
\begin{align}\label{u1-2-difference-in-l1-time-derivative-ineqn4}
&\int_{\widehat{\Omega}}(u_1-u_2)_+(x,t)\psi (x)w_{\delta}(x)\,dx\le 
C_T\int_0^t\int_{\widehat{\Omega}}(u_1-u_2)_+(x,t)\psi(x)\,dx\,dt\quad\forall 0<t<T\notag\\
\Rightarrow\quad&\int_{\widehat{\Omega}}(u_1-u_2)_+(x,t)\psi (x)\,dx\le 
C_T\int_0^t\int_{\widehat{\Omega}}(u_1-u_2)_+(x,t)\psi(x)\,dx\,dt\quad\forall 0<t<T\quad\mbox{ as }\delta\to 0.
\end{align}
By \eqref{u1-2-difference-in-l1-time-derivative-ineqn4}, 
\begin{equation}\label{u1<u2-bd-time}
u_1(x,t)\le u_2(x,t)\quad\forall x\in \widehat{\Omega}, 0<t<T.
\end{equation}
Letting $T\to\infty$ in \eqref{u1<u2-bd-time} we get \eqref{u1-u2-compare} and the theorem follows.
\end{proof}

By Theorem 1.1, Lemma 2.3, Lemma 2.15 of \cite{HK2} and Theorem \ref{uniqueness-thm-bd-domain} and Proposition \ref{u-go-to-u0-l1-5} we have the following result. 

\begin{thm}\label{unique-soln-thm-bd-domain}
Let $n\geq 3$, $0<m<\frac{n-2}{n}$, $0<\delta_1<\min (1,\delta_0)$, $\mu_0>0$, $f\in C^3(\partial\Omega\times (0,\infty))\cap L^{\infty}(\partial\Omega\times (0,\infty))$ be such that $f\ge\mu_0$ on $\partial\Omega\times (0,\infty)$ and $\mu_0\leq u_0\in L_{loc}^p(\2{\Omega}\setminus\{a_1,\cdots,a_{i_0}\})$ for some constant  $p>\frac{n(1-m)}{2}$ be such that \eqref{u0-lower-blow-up-rate} and \eqref{upper-blow-up-rate-initial-data} hold for some constants  satisfying \eqref{gamma-gamma'-lower-bd} and $\lambda_1$, $\cdots$, $\lambda_{i_0}$, $\lambda_1'$, $\cdots$, $\lambda_{i_0}'\in\R^+$. Then there exists a unique solution $u$ of \eqref{Dirichlet-blow-up-problem} 
which satisfies  \eqref{u-lower-bd5} and \eqref{initial-value-l1-sense2} such that for any constants $T>0$ and $\delta_2\in (0,\delta_1)$ there exist constants $C_1=C_1(T)>0$, $C_2=C_2(T)>0$, depending only on $\lambda_1$, 
$\cdots$, $\lambda_{i_0}$, $\lambda_1'$, $\cdots$, $\lambda_{i_0}'$, $\gamma_1$, $\cdots$, $\gamma_{i_0}$, 
$\gamma_1'$, $\cdots$, $\gamma_{i_0}'$, such that
\begin{equation}\label{u-lower-upper-blow-up-rate5}
\frac{C_1}{|x-a_i|^{\gamma_i}}\le u(x,t)\le\frac{C_2}{|x-a_i|^{\gamma_i'}} \quad \forall 0<|x-a_i|<\delta_2, 0<t<T, i=1,2,\dots,i_0
\end{equation}
holds.
\end{thm}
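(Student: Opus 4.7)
The plan is to assemble the statement from the previously cited results, so the main task is bookkeeping: produce a candidate solution from the existence theory of \cite{HK2}, verify that it has every quantitative property needed to feed into Theorem \ref{uniqueness-thm-bd-domain}, and then apply the latter to rule out any other such solution.

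For existence, I would invoke Theorem 1.1 of \cite{HK2} directly with the given $f$ and $u_0$: since $f\ge\mu_0\ge 0$, $f\in C^3\cap L^\infty$, and $u_0$ satisfies \eqref{u0-lower-blow-up-rate} with exponents obeying \eqref{gamma-gamma'-lower-bd}, that theorem produces a solution $u$ of \eqref{Dirichlet-blow-up-problem} satisfying the lower blow-up bound \eqref{eq-lower-limit-of-u-near-blow-up-points}. The additional upper bound \eqref{upper-blow-up-rate-initial-data} on $u_0$ is precisely the hypothesis under which \cite{HK2} yields \eqref{u-upper-blow-up-rate}. Combining these two pointwise estimates gives the two-sided bound \eqref{u-lower-upper-blow-up-rate5} on $u$.

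Next I would verify the remaining two properties. The positivity \eqref{u-lower-bd5}, namely $u\ge\mu_0$ throughout $\widehat{\Omega}\times(0,\infty)$, follows from Lemma 2.3 and Lemma 2.15 of \cite{HK2}: since the constant $\mu_0$ is a (sub)solution of \eqref{fde} bounded above by $u_0$ on $\widehat{\Omega}$ and by $f$ on $\partial\Omega$, the comparison built into the construction propagates this lower bound. The $L^1$ attainment of the initial data \eqref{initial-value-l1-sense2} is exactly Proposition \ref{u-go-to-u0-l1-5}, which was established using the local $L^1$ decay estimate from Lemma \ref{u-local-l1-bd-intial-value-lem} together with the definition of the solution.

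For uniqueness, suppose $\4u$ is a second solution of \eqref{Dirichlet-blow-up-problem} with the same $u_0$ and $f$ that also satisfies \eqref{u-lower-bd5}, \eqref{initial-value-l1-sense2}, and \eqref{u-lower-upper-blow-up-rate5}. I apply Theorem \ref{uniqueness-thm-bd-domain} twice: once with $(u_1,u_2)=(u,\4u)$ and once with $(u_1,u_2)=(\4u,u)$, in both cases taking $u_{0,1}=u_{0,2}=u_0$, $f_1=f_2=f$, and the same exponents $\gamma_i,\gamma_i'$ and constants $\lambda_i,\lambda_i'$. All hypotheses of that theorem are met: \eqref{u0-1-2-ineqn}–\eqref{gamma-gamma'-lower-bd} by the assumptions on $u_0$, \eqref{u1-2-lower-bd} by \eqref{u-lower-bd5}, \eqref{u-lower-upper-blow-up-rate} by \eqref{u-lower-upper-blow-up-rate5}, and \eqref{initial-value-l1-sense} by \eqref{initial-value-l1-sense2}. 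The conclusion \eqref{u1-u2-compare} applied in both orderings forces $u\equiv\4u$. Since this is essentially a direct assembly, there is no real obstacle; the only point requiring care is checking that the lower bound $u\ge\mu_0$ is genuinely available from \cite{HK2} in the form needed to apply Theorem \ref{uniqueness-thm-bd-domain}, which is why Lemma 2.3 and Lemma 2.15 of \cite{HK2} are cited explicitly.
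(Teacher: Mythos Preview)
Your proposal is correct and mirrors exactly what the paper does: it assembles existence and the pointwise bounds from Theorem 1.1, Lemma 2.3 and Lemma 2.15 of \cite{HK2}, obtains \eqref{initial-value-l1-sense2} from Proposition \ref{u-go-to-u0-l1-5}, and then derives uniqueness by applying Theorem \ref{uniqueness-thm-bd-domain} in both orderings. There is no additional idea in the paper's treatment beyond this bookkeeping.
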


\begin{proof}[\textbf{Proof of Theorem \ref{uniqueness-thm2-bd-domain}}:]
We will use a modification of the proof of Lemma 2.3 of \cite{DaK} to prove the theorem.
Let
\begin{equation*}
A=A(x,t)=\left\{\begin{aligned}
&\frac{u_1(x,t)^m-u_2(x,t)^m}{u_1(x,t)-u_2(x,t)}\quad\forall x\in\widehat{\Omega}, t>0 \mbox{ satisfying }u_1(x,t)\ne u_2(x,t)\\
&mu_1(x,t)^{m-1}\qquad\qquad\forall x\in\widehat{\Omega}, t>0 \mbox{ satisfying }u_1(x,t)= u_2(x,t)\\
&0\qquad\qquad\qquad\qquad\,\,\forall x=a_i, i=1,\cdots, i_0,t>0.
\end{aligned}\right.
\end{equation*}
For any $k\in\Z^+$, let
\begin{equation*}
\alpha_k(x,t)=\left\{\begin{aligned}
&\frac{|u_1(x,t)^m-u_2(x,t)^m|}{|u_1(x,t)-u_2(x,t)|+(1/k)}\quad\forall x\in\widehat{\Omega}, t>0\\
&0\qquad\qquad\qquad\qquad\qquad\,\,\forall x=a_i, i=1,\cdots, i_0,t>0
\end{aligned}\right.
\end{equation*}
and $A_k=A_k(x,t)=\alpha_k(x,t)+k^{-1}$. We choose a nonnegative monotone increasing function $\phi\in C^{\infty}(\R)$ such that $\phi(s)=0$ for any $s\le 0$ and $\phi(s)=1$ for any $s\ge 1$. Let $0<\delta_2\le\delta_1/2$. For any  $\delta\in (0,\delta_2/2)$ and $j\ge 2/\delta_2$, let $\phi_j(x)=\phi(j(|x|-\delta))$. Let $t_1>t_0>0$ and $0\le h\in C^{\infty}_0(\Omega_{\delta_2})$. For any $k\in\Z^+$ and $0<\delta\le\delta_2/2$, let
$\psi_{k,\delta}$ be the solution of 
\begin{equation}
\left\{\begin{aligned}
&\psi_t+A_k\Delta\psi=0\quad\mbox{ in }\Omega_{\delta}\times (0,t_1)\\
&\psi(x,t)=0\qquad\quad\,\mbox{on }\1\Omega_{\delta}\times (0,t_1)\\
&\psi(x,t_0)=h(x)\quad\,\mbox{ in }\Omega_{\delta}
\end{aligned}\right.
\end{equation}
and 
\begin{equation*}
w_j(x)=\Pi_{i=1}^{i_0}\phi_j(x-a_i).
\end{equation*}
Then $|\nabla w_j|\le Cj$ and $|\Delta w_j|\le Cj^2$ for some constant $C>0$.
By the maximum principle, $0\le \psi_{k,\delta}\le \|h\|_{L^{\infty}}$ in 
$\Omega_{\delta}\times (0,t_1)$. Hence 
$\frac{\1\psi_{k,\delta}}{\1\nu}\le 0$ on $\1\Omega\times (0,t_1)$. 
Then
\begin{align}\label{u1-2-l1-ineqn1}
&\int_{\Omega_{\delta}}(u_1(x,t_1)-u_2(x,t_1))h(x)\,dx\notag\\
=&\int_{\Omega_{\delta}}(u_1(x,t_0)-u_2(x,t_0))\psi_{k,\delta}(x,t_0)w_j(x)\,dx+\int_{t_0}^{t_1}\int_{\1\Omega}(f_2^m-f_1^m)\frac{\1\psi_{k,\delta}}{\1\nu}\,d\sigma\, dt\notag\\
&\qquad +\int_{t_0}^{t_1}\int_{\Omega_{\delta}}(u_1-u_2)\left\{w_j\left(\1_t\psi_{k,\delta}+A\Delta \psi_{k,\delta}\right)+A\nabla w_j\cdot\nabla\psi_{k,\delta}+A\psi_{k,\delta}\Delta w_j\right\}\,dx\,dt\notag\\
\le&\|h\|_{L^{\infty}}\int_{\Omega_{\delta}}(u_1(x,t_0)-u_2(x,t_0))_+\,dx
+\int_{t_0}^{t_1}\int_{\Omega_{\delta}}|u_1-u_2||A-A_k||\Delta \psi_{k,\delta}|\,dx\,dt\notag\\
&\qquad +C\sum_{i=1}^{i_0}\int_{t_0}^{t_1}\int_{\delta\le |x-a_i|\le\delta+j^{-1}}|u_1^m-u_2^m|\left\{j|\nabla|x-a_i|\cdot\nabla\psi_{k,\delta}|+j^2\psi_{k,\delta}\right\}\,dx\,dt\notag\\
=&I_1+I_2+I_3.
\end{align}
We will now use a modification of the proof of Theorem 2.1 of \cite{PV} to estimate the derivative of $\psi_{k,\delta}$ on $\cup_{i=1}^{i_0}\1 B_{\delta}(a_i)\times (0,t_1)$. Let
\begin{equation}\label{qi-defn}
q_i(x)=\frac{\delta^{2-n}-|x-a_i|^{2-n}}{\delta^{2-n}-\delta_2^{2-n}}\cdot\|h\|_{L^{\infty}}\quad\forall i=1,\cdots,i_0.
\end{equation}
Then for any $i=1,\cdots,i_0$, $q_i$ satisfies
\begin{equation}\label{q-time-eqn}
\left\{\begin{aligned}
q_t+A_k\Delta q=&0\qquad\quad\mbox{ in }(B_{\delta_2}(a_i)\setminus\2{B_{\delta}(a_i)})\times (0,t_1)\\
q=&0\qquad\quad\mbox{ on }\1 B_{\delta}(a_i)\times (0,t_1)\\
q=&\|h\|_{L^{\infty}}\quad\mbox{ on }\1 B_{\delta_2}(a_i)\times (0,t_1)\\
q\ge&0 \qquad\quad\mbox{ on }B_{\delta_2}(a_i)\setminus\2{B_{\delta}(a_i)}
\end{aligned}\right.
\end{equation}
Since $\psi_{k,\delta}$ is a subsolution of \eqref{q-time-eqn}, by the maximum principle,
\begin{align}
&0\le \psi_{k,\delta}(x,t)\le q_i(x)\quad\forall\delta\le |x-a_i|\le\delta_2, 0<t\le t_1, i=1,\cdots,i_0\label{psi-k-delta-upper-bd}\\
\Rightarrow\quad&\left|\frac{\1\psi_{k,\delta}}{\1\nu}\right|\le\left|\frac{\1 q_i}{\1\nu}\right|
=\frac{(n-2)\delta^{1-n}}{\delta^{2-n}-\delta_2^{2-n}}\|h\|_{L^{\infty}}\quad\mbox{ on }\1 B_{\delta}(a_i)\times (0,t_1)\quad\forall i=1,\cdots,i_0.\label{psi-k-delta-derivative-upper-bd}
\end{align}
By \eqref{qi-defn} and the mean value theorem,
\begin{equation}\label{qi-upper-bd}
q_i(x)\le \frac{(n-2)j^{-1}\delta^{1-n}}{\delta^{2-n}-\delta_2^{2-n}}\|h\|_{L^{\infty}}\quad\forall \delta\le |x-a_i|\le\delta+j^{-1}, i=1,\cdots,i_0.
\end{equation}
By \eqref{u-lower-upper-blow-up-rate},  \eqref{gamma-gamma'-lower-bd2}, \eqref{u1-2-l1-ineqn1}, \eqref{psi-k-delta-upper-bd}, \eqref{psi-k-delta-derivative-upper-bd} and \eqref{qi-upper-bd},
\begin{align}\label{u1-2-l1-ineqn2}
I_3\le&C\sum_{i=1}^{i_0}j\int_{t_0}^{t_1}\int_{\delta\le |x-a_i|\le\delta+j^{-1}}|u_1^m-u_2^m|\left\{|\nabla|x-a_i|\cdot\nabla\psi_{k,\delta}|+\frac{(n-2)\delta^{1-n}}{\delta^{2-n}-\delta_2^{2-n}}\|h\|_{L^{\infty}}\right\}\,dx\,dt\notag\\
=&C\sum_{i=1}^{i_0}\int_{t_0}^{t_1}\int_{\1 B_{\delta}(a_i)}|u_1^m-u_2^m|\left\{\left|\frac{\1\psi_{k,\delta}}{\1\nu}\right|+\frac{(n-2)\delta^{1-n}}{\delta^{2-n}-\delta_2^{2-n}}\|h\|_{L^{\infty}}\right\}\,dx\,dt\quad\mbox{ as }j\to\infty\notag\\
\le&2C\frac{(n-2)\delta^{1-n}}{\delta^{2-n}-\delta_2^{2-n}}\|h\|_{L^{\infty}}\sum_{i=1}^{i_0}\int_{t_0}^{t_1}\int_{\1 B_{\delta}(a_i)}|u_1^m-u_2^m|\,d\sigma\,dt\notag\\
\le&\frac{C'\|h\|_{L^{\infty}}t_1\delta^{1-n}}{\delta^{2-n}-\delta_2^{2-n}}\sum_{i=1}^{i_0}\delta^{n-1-m\gamma_i'}.
\end{align}
By the same argument as the proof of Lemma 2.3 of \cite{DK},
\begin{equation}\label{I2-go-to-0}
\lim_{k\to\infty}I_2=0.
\end{equation}
Hence letting first $j\to\infty$ and then $k\to\infty$ in \eqref{u1-2-l1-ineqn1}, by \eqref{u1-2-l1-ineqn2} and \eqref{I2-go-to-0}, 
\begin{align}\label{u1-2-l1-ineqn4}
&\int_{\Omega_{\delta}}(u_1(x,t_1)-u_2(x,t_1))h(x)\,dx\notag\\
\le&\|h\|_{L^{\infty}}\int_{\Omega_{\delta}}(u_1(x,t_0)-u_2(x,t_0))_+\,dx
+\frac{C'\|h\|_{L^{\infty}}t_1\delta^{1-n}}{\delta^{2-n}-\delta_2^{2-n}}\sum_{i=1}^{i_0}\delta^{n-1-m\gamma_i'}.
\end{align}
Letting $t_0\to 0$ in \eqref{u1-2-l1-ineqn4}, by \eqref{initial-value-l1-sense} and \eqref{gamma-gamma'-lower-bd2}, 
\begin{align}\label{u1-2-l1-ineqn5}
&\int_{\Omega_{\delta}}(u_1(x,t_1)-u_2(x,t_1))h(x)\,dx
\le \frac{C'\|h\|_{L^{\infty}}t_1\delta^{2-n}}{\delta^{2-n}-\delta_2^{2-n}}\sum_{i=1}^{i_0}\delta^{n-2-m\gamma_i'}\notag\\
\Rightarrow\quad&\int_{\widehat{\Omega}}(u_1(x,t_1)-u_2(x,t_1))h(x)\,dx=0\quad\quad\forall t_1>0\quad\mbox{ as }\delta\to 0.
\end{align}
We now choose a sequence of smooth functions $0\le h_i\in C_0^{\infty}(\Omega_{\delta_2})$ such that $h_i(x)\to\chi_{\{u_1>u_2\}\cap\Omega_{\delta_2}}(x)$ for any $x\in \Omega_{\delta_2}$ as $i\to\infty$. Putting $h=h_i$ in \eqref{u1-2-l1-ineqn5} and letting $i\to\infty$,
\begin{align*}
&\int_{\Omega_{\delta_2}}(u_1(x,t_1)-u_2(x,t_1))_+\,dx=0\quad\forall t_1>0, 0<\delta_2<\delta_1/2\\
\Rightarrow\quad&\int_{\widehat{\Omega}}(u_1(x,t_1)-u_2(x,t_1))_+\,dx=0\quad\forall t_1>0\quad\mbox{ as }\delta_2\to 0
\end{align*}
and \eqref{u1-u2-compare} follows.
\end{proof}

By Theorem 1.1, Lemma 2.3, Lemma 2.15 of \cite{HK2} and Theorem \ref{uniqueness-thm2-bd-domain} and Proposition \ref{u-go-to-u0-l1-5} we have the following result. 

\begin{thm}\label{unique-soln-thm2-bd-domain}
Let $n\geq 3$, $0<m<\frac{n-2}{n}$, $0<\delta_1<\min (1,\delta_0)$, $\mu_0>0$, $f\in L^{\infty}(\partial\Omega\times (0,\infty))$ be such that $f\ge\mu_0$ on $\partial\Omega\times (0,\infty)$ and $\mu_0\leq u_0\in L_{loc}^p(\2{\Omega}\setminus\{a_1,\cdots,a_{i_0}\})$  for some constant  $p>\frac{n(1-m)}{2}$ be such that \eqref{u0-lower-blow-up-rate} and \eqref{upper-blow-up-rate-initial-data} 
hold for some constants satisfying \eqref{gamma-gamma'-lower-bd2} and $\lambda_1$, $\cdots$, $\lambda_{i_0}$, $\lambda_1'$, $\cdots$, $\lambda_{i_0}'\in\R^+$. 
Then there exists a unique solution $u$ of \eqref{Dirichlet-blow-up-problem} 
which satisfies  \eqref{initial-value-l1-sense2} such that for any constants $T>0$ and $\delta_2\in (0,\delta_1)$ there exist constants $C_1=C_1(T)>0$, $C_2=C_2(T)>0$, such that \eqref{u-lower-upper-blow-up-rate5} holds.
\end{thm}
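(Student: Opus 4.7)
The plan is to assemble Theorem \ref{unique-soln-thm2-bd-domain} from ingredients already in hand: existence of a solution with the correct trace at $t=0$ comes from \cite{HK2} combined with Proposition \ref{u-go-to-u0-l1-5}, the two-sided blow-up rates come from \cite{HK2}, and uniqueness comes from Theorem \ref{uniqueness-thm2-bd-domain}. Since Theorem \ref{unique-soln-thm-bd-domain} follows this same template in the smooth-$f$ setting, I expect the argument here to be a direct parallel, with the only substantive change being that one now invokes Theorem \ref{uniqueness-thm2-bd-domain} (which handles merely $L^\infty$ boundary data) in place of Theorem \ref{uniqueness-thm-bd-domain}.

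First, I would invoke Theorem 1.1 of \cite{HK2} with the prescribed $u_0$ and $f$ to produce a solution $u$ of \eqref{Dirichlet-blow-up-problem}. The integrability of $u_0$ and the lower rate \eqref{u0-lower-blow-up-rate} are the standing hypotheses needed, so existence is granted. Since $f\ge\mu_0$ and $u_0\ge\mu_0$, the construction of \cite{HK2} additionally furnishes \eqref{u-lower-bd5}, that is, $u\ge\mu_0$ throughout $\widehat{\Omega}\times(0,\infty)$.

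Second, to produce the two-sided bound \eqref{u-lower-upper-blow-up-rate5}, I would appeal to Lemma 2.3 of \cite{HK2} (which extracts the lower rate $C_1|x-a_i|^{-\gamma_i}$ from the lower half of \eqref{2initial-value-lower-upper-bd}) and to Lemma 2.15 of \cite{HK2} (whose range of applicability is precisely where the upper bound $\gamma_i'<\frac{n-2}{m}$ in \eqref{gamma-gamma'-lower-bd2} is used, yielding the upper rate $C_2|x-a_i|^{-\gamma_i'}$). The initial $L^1$-trace statement \eqref{initial-value-l1-sense2} then follows directly from Proposition \ref{u-go-to-u0-l1-5}. At this point the constructed $u$ meets every hypothesis of Theorem \ref{uniqueness-thm2-bd-domain}.

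Finally, for uniqueness, suppose $v$ is any other solution of \eqref{Dirichlet-blow-up-problem} satisfying $v\ge\mu_0$, \eqref{initial-value-l1-sense2}, and \eqref{u-lower-upper-blow-up-rate5}. I would apply Theorem \ref{uniqueness-thm2-bd-domain} twice, with $u_{0,1}=u_{0,2}=u_0$ and $f_1=f_2=f$: taking $(u_1,u_2)=(u,v)$ yields $u\le v$, and taking $(u_1,u_2)=(v,u)$ yields the reverse inequality, hence $u=v$. There is no substantial mathematical obstacle; the statement is essentially a packaging of existence, the sharp blow-up-rate estimates of \cite{HK2}, and the comparison principle of Theorem \ref{uniqueness-thm2-bd-domain}. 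The only genuine point to check is consistency between the range \eqref{gamma-gamma'-lower-bd2} and the range of validity of Lemma 2.15 of \cite{HK2}, which is precisely guaranteed by the assumed upper bound $\gamma_i'<\frac{n-2}{m}$.
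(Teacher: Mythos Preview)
Your proposal is correct and matches the paper's own argument essentially verbatim: the paper derives Theorem \ref{unique-soln-thm2-bd-domain} in one line by citing Theorem 1.1, Lemma 2.3 and Lemma 2.15 of \cite{HK2} together with Theorem \ref{uniqueness-thm2-bd-domain} and Proposition \ref{u-go-to-u0-l1-5}, exactly the ingredients you list and in the same roles.
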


\begin{proof}[\textbf{Proof of Theorem \ref{uniqueness-thm-Rn}}:]
Since the proof is similar to the proof of Theorem \ref{uniqueness-thm2-bd-domain}, we will only sketch the argument here. Let
\begin{equation}\label{a-defn}
A=A(x,t)=\left\{\begin{aligned}
&\frac{u_1(x,t)^m-u_2(x,t)^m}{u_1(x,t)-u_2(x,t)}\quad\forall x\in\widehat{\R^n}, t>0 \mbox{ satisfying }u_1(x,t)\ne u_2(x,t)\\
&mu_1(x,t)^{m-1}\qquad\qquad\forall x\in\widehat{\R^n}, t>0 \mbox{ satisfying }u_1(x,t)= u_2(x,t)\\
&0\qquad\qquad\qquad\qquad\,\,\forall x=a_i, i=1,\cdots, i_0,t>0.
\end{aligned}\right.
\end{equation}
For any $k\in\Z^+$, let
\begin{equation}\label{alpha-k-defn}
\alpha_k(x,t)=\left\{\begin{aligned}
&\frac{|u_1(x,t)^m-u_2(x,t)^m|}{|u_1(x,t)-u_2(x,t)|+(1/k)}\quad\forall x\in\widehat{\R^n}, t>0\\
&0\qquad\qquad\qquad\qquad\qquad\quad\forall x=a_i, i=1,\cdots, i_0,t>0
\end{aligned}\right.
\end{equation}
and $A_k=A_k(x,t)=\alpha_k(x,t)+k^{-1}$. Let $0<\delta_2\le\delta_1/2$. For any  $\delta\in (0,\delta_2/2)$ and $j\ge 2/\delta_2$, let $\phi$, $\phi_j$ and $w_j$ be as in the proof of Theorem \ref{uniqueness-thm2-bd-domain}. Let $t_1>t_0>0$, $R_0'>R_1+1$, $R>2R_0'$ and $h\in C_0^{\infty}(\Omega_{\delta_2,R_0'})$.
For any $k\in\Z^+$ and $0<\delta\le\delta_2/2$, let
$\psi_{k,\delta,R}$ be the solution of 
\begin{equation}\label{psi-k-delta-R-eqn}
\left\{\begin{aligned}
&\psi_t+A_k\Delta\psi=0\quad\mbox{ in }\Omega_{\delta,R}\times (0,t_1)\\
&\psi(x,t)=0\qquad\quad\,\mbox{on }\1\Omega_{\delta,R}\times (0,t_1)\\
&\psi(x,t_0)=h(x)\quad\,\mbox{ in }\Omega_{\delta,R}
\end{aligned}\right.
\end{equation}
By the maximum principle, $0\le \psi_{k,\delta,R}\le \|h\|_{L^{\infty}}$ in 
$\Omega_{\delta,R}\times (0,t_1)$. Hence 
$\frac{\1\psi_{k,\delta,R}}{\1\nu}\le 0$ on $\1 B_R\times (0,t_1)$. 
Then by an argument similar to the proof of Theorem \ref{uniqueness-thm2-bd-domain},
\begin{align}\label{u1-2-l1-Rn-ineqn3}
&\int_{\Omega_{\delta,R}}(u_1(x,t_1)-u_2(x,t_1))h(x)\,dx\notag\\
\le&\|h\|_{L^{\infty}}\int_{\Omega_{\delta,R}}(u_1(x,t_0)-u_2(x,t_0))_+\,dx
+\int_{t_0}^{t_1}\int_{\Omega_{\delta,R}}|u_1-u_2||A-A_k||\Delta \psi_{k,\delta,R}|\,dx\,dt\notag\\
&\qquad +\int_{t_0}^{t_1}\int_{\1 B_R}|u_1(x,t)^m-u_2(x,t)^m|\left|\frac{\1\psi_{k,\delta,R}}{\1\nu}\right|\,d\sigma\, dt\notag\\
&\qquad +\frac{C\delta^{2-n}}{\delta^{2-n}-\delta_2^{2-n}}\|h\|_{L^{\infty}}\sum_{i=1}^{i_0}\delta^{n-2-m\gamma_i'}.
\end{align}
Let 
\begin{equation*}
Q(x)=\frac{|x|^{2-n}-R^{2-n}}{(R/2)^{2-n}-R^{2-n}}\|h\|_{L^{\infty}}.
\end{equation*}
Then $Q$ satisfies
\begin{equation}\label{q-time-eqn2}
\left\{\begin{aligned}
q_t+A_k\Delta q=&0\qquad\quad\mbox{ in }(B_R\setminus\2{B_{R/2}})\times (0,t_1)\\
q=&0\qquad\quad\mbox{ on }\1 B_R\times (0,t_1)\\
q=&\|h\|_{L^{\infty}}\quad\mbox{ on }\1 B_{R/2}\times (0,t_1)\\
q\ge&0 \qquad\quad\mbox{ on }B_R\setminus B_{R/2}
\end{aligned}\right.
\end{equation}
Since $\psi_{k,\delta,R}$ is a subsolution of \eqref{q-time-eqn2}, by the maximum principle,
\begin{align}\label{psi-k-delta-r-derivative-bd}
&0\le \psi_{k,\delta,R}(x,t)\le Q(x)\quad\forall R/2\le |x|\le R, 0<t\le t_1\notag\\
\Rightarrow\quad&\left|\frac{\1\psi_{k,\delta,R}}{\1\nu}\right|\le\left|\frac{\1 Q}{\1\nu}\right|
=\frac{(n-2)R^{1-n}}{(R/2)^{2-n}-R^{2-n}}\|h\|_{L^{\infty}}\le\frac{C}{R}\|h\|_{L^{\infty}}\quad\mbox{ on }\1 B_R\times (0,t_1).
\end{align}
By \eqref{u1-2-l1-Rn-ineqn3} and \eqref{psi-k-delta-r-derivative-bd},
\begin{align}\label{u1-2-l1-Rn-ineqn4}
&\int_{\Omega_{\delta,R}}(u_1(x,t_1)-u_2(x,t_1))h(x)\,dx\notag\\
\le&\|h\|_{L^{\infty}}\int_{\Omega_{\delta,R}}(u_1(x,t_0)-u_2(x,t_0))_+\,dx
+\int_{t_0}^{t_1}\int_{\Omega_{\delta,R}}|u_1-u_2||A-A_k||\Delta \psi_{k,\delta,R}|\,dx\,dt\notag\\
&\qquad +\frac{C\|h\|_{L^{\infty}}}{R}\int_{t_0}^{t_1}\int_{\1 B_R}|u_1(x,t)^m-u_2(x,t)^m|\,d\sigma\, dt\notag\\
&\qquad +\frac{C\delta^{2-n}}{\delta^{2-n}-\delta_2^{2-n}}\|h\|_{L^{\infty}}\sum_{i=1}^{i_0}\delta^{n-2-m\gamma_i'}.
\end{align}
Letting first $k\to\infty$ and then $t_0\to 0$, $\delta\to 0$ in \eqref{u1-2-l1-Rn-ineqn4}, by the proof of Lemma 2.3 of \cite{DK} and similar argument as the proof of Theorem \ref{uniqueness-thm2-bd-domain}, the first term, second term and the last term on the right hand side of \eqref{u1-2-l1-Rn-ineqn4} vanish. This together with the mean value theorem and \eqref{u1-2-Rn-lower-bd} implies that
\begin{align}\label{u1-2-l1-Rn-ineqn5}
&\int_{\widehat{B}_R}(u_1(x,t_1)-u_2(x,t_1))h(x)\,dx\notag\\
\le &\frac{C\|h\|_{L^{\infty}}}{R}\int_{t_0}^{t_1}\int_{\1 B_R}|u_1(x,t)^m-u_2(x,t)^m|\,d\sigma\, dt\notag\\
\le &\frac{mC\mu_0^{m-1}\|h\|_{L^{\infty}}}{R}\int_0^{t_1}\int_{\1 B_R}|u_1(x,t)-u_2(x,t)|\,d\sigma\, dt\notag\\
\le &\frac{C'\|h\|_{L^{\infty}}}{R}\left\{\int_0^{t_1}\int_{\1 B_R}|u_1(x,t)-\mu_0|\,d\sigma\, dt
+\int_0^{t_1}\int_{\1 B_R}|u_1(x,t)-\mu_0|\,d\sigma\, dt\right\}.
\end{align}
By \eqref{uj-mu0-l1-difference} there exists a sequence $\{R_j\}_{j=2}^{\infty}\subset (2R_0',\infty)$, $R_j\to\infty$ as $j\to\infty$, such that
\begin{equation}\label{uj-mu0-l1-difference-go-0-at-infty}
\int_0^{t_1}\int_{\1 B_{R_j}}(|u_1(x,t)-\mu_0|+|u_2(x,t)-\mu_0|)\,d\sigma\, dt\to 0\quad\mbox{ as }j\to\infty.
\end{equation} 
Putting $R=R_j$ in \eqref{u1-2-l1-Rn-ineqn5} and letting $j\to\infty$, by \eqref{uj-mu0-l1-difference-go-0-at-infty},
\begin{equation}\label{u1-2-l1-ineqn-rn}
\int_{\widehat{\R^n}}(u_1(x,t_1)-u_2(x,t_1))h(x)\,dx=0\quad\forall t_1>0.
\end{equation}
By \eqref{u1-2-l1-ineqn-rn} and an argument similar to the proof of Theorem \ref{uniqueness-thm2-bd-domain},
\begin{equation*}
\int_{\widehat{\R^n}}(u_1(x,t_1)-u_2(x,t_1))_+\,dx=0\quad\forall t_1>0
\end{equation*}
and \eqref{u1-u2-Rn-compare} follows.
\end{proof}

By Theorem 1.2, Lemma 2.3, Lemma 2.15 and the proof of Theorem 1.6 of \cite{HK2} and Theorem \ref{uniqueness-thm-Rn}  we have the following result. 

\begin{thm}\label{unique-Rn-soln-thm}
Let $n\geq 3$, $0<m<\frac{n-2}{n}$, $0<\delta_1<\min (1,\delta_0)$, $\mu_0>0$ and $\mu_0\leq u_0\in L_{loc}^p(\widehat{\R^n}\setminus\{a_1,\cdots,a_{i_0}\})$  for some constant  $p>\frac{n(1-m)}{2}$ be such that 
\eqref{2initial-value-lower-upper-bd} holds for some constants satisfying \eqref{gamma-upper-lower-bd1} and $\lambda_1$, $\cdots$, $\lambda_{i_0}$, $\lambda_1'$, $\cdots$, $\lambda_{i_0}'\in\R^+$. Suppose \eqref{u0j-mu0-l1-difference} also holds for some constant $R_1>R_0$.
Then there exists a unique solution $u$ of \eqref{cauchy-blow-up-problem} which satisfy
\begin{equation*}
u(x,t)\ge\mu_0\quad\forall x\in \widehat{\R^n}, t>0
\end{equation*}
and
\begin{equation*}
\int_{\widehat{\R^n}}|u(x,t)-\mu_0|\,dx\le\int_{\widehat{\R^n}}|u_0-\mu_0|\,dx\quad\forall t>0
\end{equation*}
such that for any constants $T>0$ and $\delta_2\in (0,\delta_1)$ there exist constants $C_1=C_1(T)>0$, $C_2=C_2(T)>0$, such that \eqref{u-lower-upper-blow-up-rate5} holds.
\end{thm}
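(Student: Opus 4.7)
The plan is to combine the existence theory from \cite{HK2} with the uniqueness Theorem \ref{uniqueness-thm-Rn} just established. First I would apply Theorem 1.2 of \cite{HK2} to produce a solution $u$ of \eqref{cauchy-blow-up-problem}: the lower bound in \eqref{2initial-value-lower-upper-bd} together with \eqref{gamma-upper-lower-bd1} supplies the hypothesis on the initial data needed there, yielding a singular solution that, in view of the matching upper bound in \eqref{2initial-value-lower-upper-bd} and \eqref{u-upper-blow-up-rate}, satisfies the two-sided blow-up estimate \eqref{u-lower-upper-blow-up-rate5} for every $T>0$ and $\delta_2\in(0,\delta_1)$.

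Next I would verify the two auxiliary bounds needed to feed $u$ into Theorem \ref{uniqueness-thm-Rn}. The pointwise inequality $u\ge\mu_0$ follows from Lemma 2.3 of \cite{HK2} by comparing $u$ with the constant stationary solution $\mu_0$, using $u_0\ge\mu_0$. For the $L^1$-contraction $\int_{\widehat{\R^n}}|u(x,t)-\mu_0|\,dx\le\int_{\widehat{\R^n}}|u_0-\mu_0|\,dx$, I would follow the argument already carried out in the proof of Theorem 1.6 of \cite{HK2}: apply the Kato inequality to $(u-\mu_0)_\pm$ against a spatial cut-off that excises small neighbourhoods of the singular points $a_i$ and truncates at large $|x|$, pass to the limit in both truncations using Lemma 2.15 of \cite{HK2} to handle the tail near infinity, and use \eqref{u0j-mu0-l1-difference} to control the data side.

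Finally, uniqueness follows by applying Theorem \ref{uniqueness-thm-Rn} to any two solutions $u_1,u_2$ of \eqref{cauchy-blow-up-problem} sharing the same initial datum $u_0$ and all the properties listed above. Running that theorem once with $(u_{0,1},u_{0,2})=(u_0,u_0)$ and $(u_1,u_2)$, and once with the roles of the two solutions reversed, yields $u_1\le u_2$ and $u_2\le u_1$, hence $u_1\equiv u_2$ on $\widehat{\R^n}\times(0,\infty)$.

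The main obstacle I anticipate is the second step: verifying that the $L^1$-contraction bound on $u-\mu_0$ is preserved for all $t>0$ despite the dual difficulty of the blow-up singularities at the $a_i$ and the unbounded spatial domain. The cut-off argument must simultaneously control the inner boundary terms on $|x-a_i|=\delta$ (absorbed using the upper blow-up rate \eqref{u-lower-upper-blow-up-rate5} together with $\gamma_i'<n$ from \eqref{gamma-upper-lower-bd1}) and the outer boundary terms on $|x|=R$ (handled by selecting, via \eqref{u0j-mu0-l1-difference}, a subsequence $R_j\to\infty$ along which the surface integrals vanish, exactly as in \eqref{uj-mu0-l1-difference-go-0-at-infty}). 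Without this uniform-in-time control the hypothesis \eqref{uj-mu0-l1-difference} of Theorem \ref{uniqueness-thm-Rn} cannot be verified and the combining argument collapses.
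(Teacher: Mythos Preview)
Your proposal is correct and matches the paper's own approach essentially word for word: the paper derives this theorem as an immediate consequence of Theorem~1.2, Lemma~2.3, Lemma~2.15 and the proof of Theorem~1.6 of \cite{HK2} together with Theorem~\ref{uniqueness-thm-Rn}, which is exactly the combination of ingredients you assemble. Your identification of the $L^1$-contraction step (handled via the proof of Theorem~1.6 of \cite{HK2} with Lemma~2.15 controlling the tail) as the main point requiring care is also on target.
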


\section{Existence of highly oscillating solution}
\setcounter{equation}{0}
\setcounter{thm}{0}

In this section we will prove the existence of initial data such that the corresponding solution of \eqref{Dirichlet-blow-up-problem-constant-bdary-value} oscillates between infinity and some positive constant as $t\to\infty$. We start with a stability result for the solutions of \eqref{Dirichlet-blow-up-problem-constant-bdary-value}.

\begin{lem}\label{stability-thm-bd-domain}
Let $n\geq 3$, $0<m<\frac{n-2}{n}$, $0<\delta_1<\min (1,\delta_0)$, $\mu_0>0$. Let  $\{u_{0,j}\}_{j=1}^{\infty}\subset L_{loc}^p(\2{\Omega}\setminus\{a_1,\cdots,a_{i_0}\})$ for some constant  $p>\frac{n(1-m)}{2}$ be a sequence of functions  satisfying 
\begin{equation}\label{u-0-j-lower-bd}
u_{0,j}\ge\mu_0\quad\mbox{ on }\2{\Omega}\setminus\{a_1,\cdots,a_{i_0}\}\quad\forall j\in\Z^+
\end{equation}
such that 
\begin{equation}\label{initial-value-lower-upper-bd2-1}
\frac{\lambda_i}{|x-a_i|^{\gamma_i}}\le u_{0,j}(x)\le \frac{\lambda_i'}{|x-a_i|^{\gamma_i'}} \qquad \forall 0<|x-a_i|<\delta_1,i=1, \cdots, i_0, j\in\Z^+
\end{equation}
holds for some constants  satisfying \eqref{gamma-gamma'-lower-bd}, $\lambda_1$, $\cdots$, $\lambda_{i_0}$, $\lambda_1'$, $\cdots$, $\lambda_{i_0}'\in\R^+$. Let $\mu_0\leq u_0\in L_{loc}^p(\2{\Omega}\setminus\{a_1,\cdots,a_{i_0}\})$  be such that \eqref{initial-value-lower-upper-bd} holds. Let $u$, $u_j$, $j\in\Z^+$, be the unique solutions of \eqref{Dirichlet-blow-up-problem-constant-bdary-value}
 with initial value $u_0$, $u_{0,j}$ respectively, given by Theorem \ref{unique-soln-thm-bd-domain}. 
Suppose 
\begin{equation}\label{u-0j-converge-to-u0}
u_{0,j}\to u_0\quad\mbox{ in }L_{loc}^p(\2{\Omega}\setminus\{a_1,\cdots,a_{i_0}\})\quad\mbox{ as }j\to\infty.
\end{equation}
Then $u_j$ converges to $u$ uniformly in $C^{2,1}(\Omega_{\delta}\times (t_1,t_2))$ as $j\to\infty$ for any $0<\delta<\delta_0$ and $t_2>t_1>0$.
\end{lem}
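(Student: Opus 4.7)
The plan is a compactness-and-uniqueness argument. The two essential inputs are uniform-in-$j$ pointwise bounds for the $u_j$ above and below, and Theorem \ref{unique-soln-thm-bd-domain} applied to the subsequential limit.

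\textbf{Uniform bounds and compactness.} Because the constants $C_1(T), C_2(T)$ in Theorem \ref{unique-soln-thm-bd-domain} depend only on the $\lambda_i, \lambda_i', \gamma_i, \gamma_i'$ (and on $T$), the blow-up estimate \eqref{u-lower-upper-blow-up-rate5} holds for every $u_j$ with the \emph{same} constants. Combined with $u_j\ge\mu_0$ and the local $L^p$-smoothing built into the construction in \cite{HK2}, this gives: for every compact $K\subset\widehat{\Omega}$ and $0<t_1<t_2$ there is a constant $M=M(K,t_1,t_2)$ independent of $j$ with $\mu_0\le u_j\le M$ on $K\times[t_1,t_2]$. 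On such sets \eqref{fde} is uniformly parabolic with smooth coefficients, so the parabolic Schauder estimates \cite{LSU} yield uniform bounds for $u_j$ in $C^{2+\alpha,1+\alpha/2}$. By the Arzela--Ascoli theorem and a diagonal argument, I extract a subsequence $u_{j_k}\to\tilde u$ in $C^{2,1}_{loc}(\widehat{\Omega}\times(0,\infty))$.

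\textbf{Identification of the limit.} The limit $\tilde u$ solves \eqref{fde} classically in $\widehat{\Omega}\times(0,\infty)$, satisfies $\tilde u\ge\mu_0$, takes the boundary value $\mu_0$ on $\partial\Omega\times(0,\infty)$, and inherits \eqref{u-lower-upper-blow-up-rate5} pointwise; in particular $\tilde u\to\infty$ at each $a_i$. To invoke Theorem \ref{unique-soln-thm-bd-domain} I must verify $\|\tilde u(\cdot,t)-u_0\|_{L^1(\Omega_\delta)}\to 0$ as $t\to 0$ for every $0<\delta<\delta_0$. I split $\Omega_\delta=(\Omega_\delta\setminus D_{\delta'})\cup D_{\delta'}$ for small $\delta'>0$: on the first, compact piece $u_{j_k}(\cdot,t)\to\tilde u(\cdot,t)$ uniformly, so the contribution can be driven to $0$ by combining $\|u_{j_k}(\cdot,t)-u_{0,j_k}\|_{L^1}\to 0$ (from Proposition \ref{u-go-to-u0-l1-5} applied to each $u_{j_k}$) with $u_{0,j_k}\to u_0$ in $L^1_{loc}$ coming from \eqref{u-0j-converge-to-u0}; on the second piece, passing to the limit $k\to\infty$ in Lemma \ref{u-local-l1-bd-intial-value-lem} provides a uniform-in-$k$ bound for $\|u_{j_k}(\cdot,t)\|_{L^1(D_{\delta'})}$ in terms of $\|u_{0,j_k}\|_{L^1(D_{2\delta'})}+Ct+|D_{\delta'}|\mu_0$. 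Sending $\delta'\to 0$ after $t\to 0$ annihilates this second contribution.

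\textbf{Uniqueness and full-sequence convergence.} Theorem \ref{unique-soln-thm-bd-domain} now applies to $\tilde u$ and forces $\tilde u=u$. Since every subsequence of $\{u_j\}$ admits a further subsequence converging to the same limit $u$ in $C^{2,1}_{loc}(\widehat{\Omega}\times(0,\infty))$, the full sequence $u_j\to u$ in this topology, which implies uniform convergence in $C^{2,1}(\Omega_\delta\times(t_1,t_2))$ for any $0<\delta<\delta_0$ and $t_2>t_1>0$.

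\textbf{Main obstacle.} The delicate point is verifying the initial trace of $\tilde u$ in the identification step: the limits $k\to\infty$ and $t\to 0$ must be commuted. This is precisely where Lemma \ref{u-local-l1-bd-intial-value-lem} is indispensable, because its constant is independent of the initial data and its dependence on $u_{0,j_k}$ enters only through the $L^1$ norm on a slightly enlarged set, which passes to the limit cleanly via \eqref{u-0j-converge-to-u0}.
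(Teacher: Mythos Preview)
Your proposal is correct and follows essentially the same route as the paper: uniform two-sided bounds (from Theorem~\ref{unique-soln-thm-bd-domain} and the $L^p$--$L^\infty$ smoothing of \cite{HK2}) give compactness, a subsequential limit inherits \eqref{u-lower-bd5} and \eqref{u-lower-upper-blow-up-rate5}, the boundary-strip estimate of Lemma~\ref{u-local-l1-bd-intial-value-lem} is used to upgrade the initial trace to $L^1(\Omega_\delta)$, and uniqueness identifies the limit. The only point to tighten is your handling of the interior piece $\Omega_\delta\setminus D_{\delta'}$: invoking Proposition~\ref{u-go-to-u0-l1-5} for each $u_{j_k}$ separately does not by itself commute the limits $k\to\infty$ and $t\to 0$; the paper resolves this (as you should) by first citing the arguments of \cite{HK1,HK2} to show the limit $\tilde u$ already attains $u_0$ on compact subsets of $\widehat{\Omega}$, so that $\int_{\Omega_\delta\setminus D_{\delta'}}|\tilde u(\cdot,t)-u_0|\to 0$ directly as $t\to 0$.
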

\begin{proof}
Let $0<\delta'<\delta<\delta_0$ and $t_2>t_1>0$. By \eqref{u-0j-converge-to-u0} there exists a constant $M_1>0$ such that
\begin{equation}\label{u0j-uniform-lp-loc}
\|u_{0,j}\|_{L^p(\Omega_{\delta'})}\le M_1\quad\forall j\in\Z^+.
\end{equation}
By \eqref{u0j-uniform-lp-loc} and Lemma 2.9 of \cite{HK2} there exists a constant $M_2>0$ depending on $M_1$ and $\mu_0$ such that
\begin{equation}\label{uj-loc-uniform-upper-bd}
\|u_j\|_{L^{\infty}(\Omega_{\delta}\times (t_1/2,t_2))}\le M_2\quad\forall j\in\Z^+.
\end{equation}
By Theorem \ref{unique-soln-thm-bd-domain},
\begin{equation}\label{uj-uniform-lower-bd}
u_j\ge\mu_0\quad\mbox{ in }\2{\Omega}\setminus\{a_1,\cdots,a_{i_0}\}\times (0,\infty)\quad\forall j\in\Z^+.
\end{equation}
By \eqref{uj-loc-uniform-upper-bd} and \eqref{uj-uniform-lower-bd} the equation \eqref{fde} for $u_j$ are uniformly parabolic on every compact subset of $\2{\Omega}\setminus\{a_1,\cdots,a_{i_0}\}\times (0,\infty)$. Hence by the Ascoli theorem,  diagonalization argument, and an argument similar to the proof of Lemma 2.11 of \cite{HK2}  
and Theorem 1.1 of \cite{HK1}  the sequence $\{u_j\}_{j=1}^{\infty}$ has a subsequence $\{u_{j_k}\}_{k=1}^{\infty}$ 
that converges uniformly in $C^{2,1}(\Omega_{\delta}\times (t_1,t_2))$ to a solution $v$ of 
\eqref{Dirichlet-blow-up-problem-constant-bdary-value} as $k\to\infty$ for any $0<\delta<\delta_0$ and $t_2>t_1>0$ and
\begin{equation}\label{v-lower-bd}
v\ge\mu_0\quad\mbox{ in }\widehat{\Omega}\times (0,\infty). 
\end{equation}
Since by Theorem \ref{unique-soln-thm-bd-domain} for any $T>0$ there exists constants $C_1=C_1(T)>0$, $C_2=C_2(T)>0$, such that \eqref{u-lower-upper-blow-up-rate5} holds for any $u_j$, putting $u=u_{j_k}$ in \eqref{u-lower-upper-blow-up-rate5} and letting $k\to\infty$,
\begin{equation}\label{v-lower-upper-blow-up-rate5}
\frac{C_1}{|x-a_i|^{\gamma_i}}\le v(x,t)\le\frac{C_2}{|x-a_i|^{\gamma_i'}} \quad \forall 0<|x-a_i|<\delta_2, 0<t<T, i=1,2,\dots,i_0.
\end{equation}
By Lemma \ref{u-local-l1-bd-intial-value-lem} there exists a constant $C>0$ such that 
\begin{align}\label{uj-integral-bd-by-time-0-integral}
\int_{D_{\delta_1}}u_j(x,t)\,dx
\le&\left\{\left(\int_{D_{2\delta_1}}u_{0,j}\,dx\right)^{1-m}+Ct\right\}^{\frac{1}{1-m}}+|D_{\delta_1}|\mu_0\quad\forall t>0, 0<\delta_1<\delta_0/2,j\in\Z^+\notag\\
\le&\left\{\left(|D_{2\delta_1}|^{1-\frac{1}{p}}\|u_{0,j}\|_{L^p(D_{2\delta_1})}\right)^{1-m}+Ct\right\}^{\frac{1}{1-m}}+|D_{\delta_1}|\mu_0\quad\forall t>0, 0<\delta_1<\delta_0/2,j\in\Z^+.
\end{align}
Let $\3>0$. By \eqref{u-0j-converge-to-u0} there exists $j_0\in\Z^+$ such that
\begin{equation}\label{u-0j-upper-bd2}
\|u_{0,j}\|_{L^p(D_{\delta_1})}\le\|u_0\|_{L^p(D_{\delta_1})}+\3\quad\forall 0<\delta_1<\delta_0, j\ge j_0.
\end{equation}
By \eqref{uj-integral-bd-by-time-0-integral}, \eqref{u-0j-upper-bd2} and Holder's inequality,
\begin{align}\label{uj-u0j-difference10}
&\int_{\Omega_{\delta}}|u_j(x,t)-u_{0,j}(x)|\,dx\notag\\
\le&\int_{\Omega_{\delta}\setminus D_{\delta_1}}|u_j(x,t)-u_{0,j}(x)|\,dx+\left\{\left(|D_{2\delta_1}|^{1-\frac{1}{p}}(\|u_0\|_{L^p(D_{2\delta_1})}+\3)\right)^{1-m}+Ct\right\}^{\frac{1}{1-m}}+|D_{\delta_1}|\mu_0\notag\\
&\qquad +|D_{\delta_1}|^{1-\frac{1}{p}}(\|u_0\|_{L^p(D_{\delta_1})}+\3)\quad\forall 0<\delta_1<\delta_0/2, t>0, j\ge j_0.
\end{align}
Leting $j=j_k\to\infty$ in \eqref{uj-u0j-difference10},
\begin{align}\label{v-u0-difference}
&\int_{\Omega_{\delta}}|v(x,t)-u_0(x)|\,dx\notag\\
\le&\int_{\Omega_{\delta}\setminus D_{\delta_1}}|v(x,t)-u_0(x)|\,dx+\left\{\left(|D_{2\delta_1}|^{1-\frac{1}{p}}(\|u_0\|_{L^p(D_{2\delta_1})}+\3)\right)^{1-m}+Ct\right\}^{\frac{1}{1-m}}+|D_{\delta_1}|\mu_0\notag\\
&\qquad +|D_{\delta_1}|^{1-\frac{1}{p}}(\|u_0\|_{L^p(D_{\delta_1})}+\3)\quad\forall 0<\delta_1<\delta_0/2, t>0.
\end{align}
Letting first $t\to 0$ and then  $\delta_1\to 0$ in \eqref{v-u0-difference},
\begin{equation}\label{v-go-to-u0}
\lim_{t\to 0}\int_{\Omega_{\delta}}|v(x,t)-u_0(x)|\,dx=0\quad\forall 0<\delta<\delta_0.
\end{equation}
By \eqref{v-lower-bd}, \eqref{v-lower-upper-blow-up-rate5}, \eqref{v-go-to-u0} and Theorem \ref{unique-soln-thm-bd-domain}, $v=u$ in $(\2{\Omega}\setminus\{a_1,\cdots,a_{i_0})\times (0,\infty)$. Hence $u_j$ converges to $u$ uniformly in $C^{2,1}(\Omega_{\delta}\times (t_1,t_2))$ as $j\to\infty$ for any $0<\delta<\delta_0$ and $t_2>t_1>0$ and the lemma follows.
\end{proof}

We next recall two results from \cite{HK2}.

\begin{thm}\label{convergence-thm1}(cf. Theorem 1.3 of \cite{HK2})
Suppose that $n\geq 3$, $0<m<\frac{n-2}{n}$ and $\mu_0>0$. Let $\mu_0\leq u_0\in L^{p}_{loc}\left(\overline{\Omega}\bs\left\{a_1,\cdots,a_{i_0}\right\}\right)$ for some constant $p>\frac{n(1-m)}{2}$ satisfy \eqref{initial-value-lower-upper-bd} for some constants satisfying \eqref{gamma-upper-lower-bd1} and $\lambda_1$, $\cdots$, $\lambda_{i_0}$, $\lambda_1'$, $\cdots$, $\lambda_{i_0}'\in\R^+$.
Let $u$ be the solution of \eqref{Dirichlet-blow-up-problem-constant-bdary-value} given by Theorem \ref{unique-soln-thm-bd-domain}. Then
\begin{equation}\label{u-infty-to-mu0}
u(x,t)\to \mu_0 \quad \mbox{ in }C^2(K) \quad \mbox{ as }t\to\infty
\end{equation}
for any compact subset $K$ of $\overline{\Omega}\bs\left\{a_1,\cdots,a_{i_0}\right\}$.
\end{thm}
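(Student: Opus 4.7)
The plan is to combine uniform-in-time two-sided pointwise bounds away from the blow-up points with parabolic compactness, then identify the $\omega$-limit by an energy/Lyapunov identity with weighted test functions.

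\textbf{Step 1 (uniform bounds).} The lower bound $u(x,t)\ge\mu_0$ comes from comparing the approximating sequence that builds $u$ in \cite{HK2} against the stationary solution $\mu_0\le u_0$ with matching boundary data $\mu_0$. For the upper bound, note that $m<\tfrac{n-2}{n}$ and $\gamma_i'<n$ give $m\gamma_i'<mn<n-2$, so
\[
\Delta\bigl(|x-a_i|^{-m\gamma_i'}\bigr)=m\gamma_i'(m\gamma_i'+2-n)|x-a_i|^{-m\gamma_i'-2}<0
\]
on each $\widehat{B}_{\delta_0}(a_i)$. Hence $\Phi(x):=\mu_0+\sum_{i=1}^{i_0}M_i|x-a_i|^{-\gamma_i'}\eta_i(x)$, with smooth cutoffs $\eta_i$ localizing near $a_i$ and constants $M_i$ large enough that $\Phi^m$ is superharmonic in $\widehat{\Omega}$ and $\Phi\ge u_0$ by \eqref{initial-value-lower-upper-bd}, is a stationary supersolution of \eqref{Dirichlet-blow-up-problem-constant-bdary-value}. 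Comparison on $\Omega_\delta$ and letting $\delta\to 0$ then yields $u(x,t)\le\Phi(x)$ for all $t>0$.

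\textbf{Step 2 (compactness).} On any compact $K\subset\widehat{\Omega}$ one has $\mu_0\le u\le\max_K\Phi<\infty$, making the equation uniformly parabolic there. Parabolic Schauder estimates give uniform $C^{2,1}$ bounds on $K\times[1,\infty)$; by Arzelà–Ascoli and a diagonal extraction, any sequence $t_n\to\infty$ admits a subsequence along which $u(x,t_n+s)\to v(x,s)$ in $C^{2,1}_{\mathrm{loc}}(\widehat{\Omega}\times\R)$, where $v$ is an eternal solution of $v_t=\Delta v^m$ with $\mu_0\le v\le\Phi$ and $v=\mu_0$ on $\partial\Omega\times\R$.

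\textbf{Step 3 (identification of the limit).} I would take a smooth cutoff $\zeta\in C_c^\infty(\overline{\Omega}\setminus\{a_1,\dots,a_{i_0}\})$ vanishing on $\partial\Omega$, multiply $u_t=\Delta u^m$ by $(u^m-\mu_0^m)\zeta$, and integrate by parts to obtain
\[
\frac{d}{dt}\int_{\widehat{\Omega}}F(u)\zeta\,dx=-\int_{\widehat{\Omega}}|\nabla u^m|^2\zeta\,dx+R[\zeta,u],
\]
with $F(u)=\tfrac{u^{m+1}-\mu_0^{m+1}}{m+1}-\mu_0^m(u-\mu_0)\ge 0$ and a remainder $R[\zeta,u]$ in $\nabla\zeta,\Delta\zeta$ controlled by Step 1. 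Since $\gamma_i'<n$ and $\zeta$ is supported away from the $a_i$, $\int F(u)\zeta\,dx$ is finite; integrating in time yields $\int_0^\infty\!\int|\nabla u^m|^2\zeta\,dx\,dt<\infty$. Passing along $t_n$ forces $\nabla v^m\equiv 0$ a.e.\ in $\widehat{\Omega}\times\R$, so $v$ is spatially constant, and the boundary condition delivers $v\equiv\mu_0$. Uniqueness of the subsequential limit then upgrades to full convergence $u(\cdot,t)\to\mu_0$ in $C^2(K)$.

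\textbf{Main obstacle.} The technical heart is making the energy identity of Step 3 rigorous despite the persistent blow-up at each $a_i$: test functions $\zeta$ vanishing near the $a_i$ avoid divergent integrals, but one must then exhaust $\widehat{\Omega}$ by a family of such cutoffs while keeping the remainder $R[\zeta,u]$ signed or negligible. This is the weighted-test-function strategy used in the proof of Theorem \ref{uniqueness-thm-bd-domain} (and originating in \cite{VW1,VW2}), adapted here to the multi-point blow-up setting as in \cite{HK2}.
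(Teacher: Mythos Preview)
First, note that the paper does not give its own proof of this theorem: it is explicitly recalled from \cite{HK2} (``We next recall two results from \cite{HK2}''), so there is no in-paper argument to compare against. Your outline is therefore being judged on its own merits.

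The overall architecture (uniform two-sided bounds $\Rightarrow$ parabolic compactness $\Rightarrow$ Lyapunov identification) is reasonable, but Step~3 has a genuine gap. With a smooth nonnegative cutoff $\zeta$ that is compactly supported in $\overline{\Omega}\setminus\{a_1,\dots,a_{i_0}\}$, the remainder after two integrations by parts is
\[
R=\tfrac12\int_{\widehat\Omega}(u^m-\mu_0^m)^2\,\Delta\zeta\,dx,
\]
and there is no nontrivial $0\le\zeta\in C_c^\infty$ with $\Delta\zeta\le 0$ everywhere (the maximum principle forbids it). The uniform bounds from Steps~1--2 only give $|R(t)|\le C$, not $\int_0^\infty|R|\,dt<\infty$, so the conclusion $\int_0^\infty\!\int|\nabla u^m|^2\zeta\,dx\,dt<\infty$ does not follow. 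Exhausting $\widehat\Omega$ by such cutoffs does not repair this: each cutoff carries its own uncontrolled time-integral of $R$. What actually works (and is closer in spirit to \cite{HK2} and to the weighted method in the proof of Theorem~\ref{uniqueness-thm-bd-domain}) is to test against a \emph{global} weight, e.g.\ the first Dirichlet eigenfunction $\phi_1>0$ of $-\Delta$ on $\Omega$, giving
\[
\frac{d}{dt}\int_{\widehat\Omega}(u-\mu_0)\phi_1\,dx=-\lambda_1\int_{\widehat\Omega}(u^m-\mu_0^m)\phi_1\,dx\le 0,
\]
after letting the cutoff radius $\delta\to 0$; the error terms from the cutoff are $O(\delta^{\,n-2-m\gamma_i'})\to 0$ precisely because $\gamma_i'<n$ and $m<\tfrac{n-2}{n}$ force $m\gamma_i'<n-2$. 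This produces a genuine monotone quantity and, combined with your Step~2 compactness, identifies the limit.

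A secondary issue: in Step~1 the supersolution $\Phi=\mu_0+\sum M_i|x-a_i|^{-\gamma_i'}\eta_i$ is not obviously a supersolution on all of $\widehat\Omega$. In the transition region of each cutoff $\eta_i$, the cross terms in $\Delta(\Phi^m)$ can be positive, and taking $M_i$ large does not force $\Delta(\Phi^m)\le 0$ there. You can salvage the uniform-in-time upper bound by comparing with the pure power barrier $A_i|x-a_i|^{-\gamma_i'}$ locally on each $\widehat B_{\delta_0}(a_i)$ (which is a stationary supersolution since $m\gamma_i'<n-2$) and handling the interior $\Omega_{\delta_0/2}$ by the $L^p$--$L^\infty$ smoothing of \cite{HK2}, rather than trying to build a single global barrier.
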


\begin{thm}\label{convergence-thm6}
Suppose that $n\geq 3$, $0<m<\frac{n-2}{n}$ and $\mu_0>0$. Let $\mu_0\leq u_0\in L^{p}_{loc}\left(\overline{\Omega}\bs\left\{a_1,\cdots,a_{i_0}\right\}\right)$ for some constant $p>\frac{n(1-m)}{2}$ satisfy \eqref{u0-lower-blow-up-rate} for some constants satisfying 
\begin{equation}\label{gamma1-large}
\gamma_1>\frac{n-2}{m},\quad
\gamma_i>\frac{2}{1-m}\quad\forall i=2,\dots,i_0,
\end{equation}
 and $0<\delta_1<\delta_0$, $\lambda_1$, $\cdots$, $\lambda_{i_0}\in\R^+$. Let $u$ be the solution of \eqref{Dirichlet-blow-up-problem-constant-bdary-value} given by Theorem \ref{unique-soln-thm-bd-domain}. Then
\begin{equation}\label{eq-blow-up-of-soluiton-u-when-gamma-greater-than-n-2-over-m-2c}
u(x,t)\to\infty \quad \mbox{ on } K\quad \mbox{ as }t\to\infty
\end{equation}
for any compact subset $K$ of $\widehat{\Omega}$. 
\end{thm}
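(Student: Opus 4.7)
The plan is to use a comparison argument with an explicit family of singular stationary subsolutions concentrated near $a_1$, exploiting the fact that the threshold exponent $(n-2)/m$—the unique $\gamma$ for which $\bigl(|x-a_1|^{-\gamma}\bigr)^m$ is a multiple of the fundamental harmonic profile $|x-a_1|^{2-n}$—lies strictly below the prescribed singularity exponent $\gamma_1$. Choose $R_0<\delta_0$ so that $a_j\notin\overline{B_{R_0}(a_1)}$ for $j\ne1$, and for each $B>0$ consider
\[
W_B(x):=\Bigl(\mu_0^m+B\bigl(|x-a_1|^{2-n}-R_0^{2-n}\bigr)\Bigr)^{1/m}\quad\text{on }B_{R_0}(a_1)\setminus\{a_1\},
\]
extended by $\mu_0$ on $\Omega\setminus B_{R_0}(a_1)$. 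Since $W_B^m$ is harmonic on $B_{R_0}(a_1)\setminus\{a_1\}$ and continuous across $\partial B_{R_0}(a_1)$ with an inward-pointing gradient jump, $W_B$ is a stationary subsolution of $u_t=\Delta u^m$ on $\widehat{\Omega}$ with $W_B=\mu_0$ on $\partial\Omega$ and blow-up $\sim B^{1/m}|x-a_1|^{-(n-2)/m}$ at $a_1$. Crucially, $W_B\nearrow+\infty$ uniformly on every compact subset of $\widehat{\Omega}$ as $B\to\infty$.

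The heart of the proof is then to show that for every $B>0$ there is a time $T_B$ with $u(\cdot,T_B)\ge W_B$ on $\widehat{\Omega}$. Granted this, applying the comparison built into the uniqueness Theorems \ref{uniqueness-thm-bd-domain}--\ref{unique-soln-thm-bd-domain} to $u$ and $W_B$ on $\widehat{\Omega}\times[T_B,\infty)$—both singular at $a_1$ with common lateral boundary value $\mu_0=W_B\le u$ on $\partial\Omega$ by \eqref{u-lower-bd5}, and common singular behaviour at $a_1$ controlled by \eqref{u-lower-upper-blow-up-rate5}—gives $u(x,t)\ge W_B(x)$ for every $t\ge T_B$ and $x\in\widehat{\Omega}$. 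Letting $B\to\infty$ yields \eqref{eq-blow-up-of-soluiton-u-when-gamma-greater-than-n-2-over-m-2c} on every compact subset of $\widehat{B_{R_0}(a_1)}$, and an interior parabolic Harnack / chaining argument—valid because $u\ge\mu_0>0$ and \eqref{u-lower-upper-blow-up-rate5} render $u_t=\Delta u^m$ uniformly parabolic on compacta of $\widehat{\Omega}$—propagates the blow-up to every compact $K\subset\widehat{\Omega}$.

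The main obstacle is establishing $u(\cdot,T_B)\ge W_B$ for large $B$. Near $a_1$ this is automatic for any $B$, because $u(x,t)\ge C_1(T)|x-a_1|^{-\gamma_1}$ from \eqref{u-lower-upper-blow-up-rate5} dominates $B^{1/m}|x-a_1|^{-(n-2)/m}$ as $|x-a_1|\to 0$ precisely thanks to $\gamma_1>(n-2)/m$. The real difficulty is on the annulus $r_B\le|x-a_1|\le R_0$, where $W_B$ is bounded by some $M_B\to\infty$ and one must ensure $u(\cdot,T_B)\ge M_B$ there for some finite $T_B$. My approach is a proof by contradiction: if no such $T_B$ existed for some $B$, then $\sup_{t>0}\|u(\cdot,t)\|_{L^\infty(K)}<\infty$ on some compact $K\subset\widehat{B_{R_0}(a_1)}$; combined with $u\ge\mu_0$ this gives uniform parabolicity of $u_t=\Delta u^m$ on $K$, so by Schauder estimates and a diagonalization argument some sequence $u(\cdot,t_j+\cdot)$ converges in $C^{2,1}_{\loc}$ to a time-independent limit $w$ satisfying $\Delta w^m=0$ on $\widehat{\Omega}$, $w=\mu_0$ on $\partial\Omega$, and $w(x)\ge C|x-a_1|^{-\gamma_1}$ near $a_1$. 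But every such stationary singular solution is of the explicit form $W_B$, with singularity exactly of order $(n-2)/m<\gamma_1$, a contradiction. The nontrivial technical ingredient is ensuring the near-$a_1$ lower bound persists with a constant uniform in large $t$, which is most naturally achieved by first comparing with a small-$B$ choice of $W_B$ (for which $u_0\ge W_B$ holds globally and propagates forward via comparison) and then upgrading the resulting $t$-uniform lower bound to the stronger $|x-a_1|^{-\gamma_1}$ rate by passing to the limit.
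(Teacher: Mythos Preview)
The paper does not prove this theorem; it is one of the ``two results [recalled] from \cite{HK2}'' (see the sentence immediately preceding Theorem~\ref{convergence-thm1}), so there is no in-paper proof to compare against.

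Your comparison-with-$W_B$ strategy is natural, but the central step---showing $u(\cdot,T_B)\ge W_B$ on the annulus for some finite $T_B$---does not close. The contradiction argument fails on two counts. First, failure of $u(\cdot,T)\ge W_B$ for every $T$ only yields a \emph{moving} point $x_T$ with $u(x_T,T)<M_B$, not a time-uniform $L^\infty$ bound on a fixed compact $K$. Second, and decisively, even granting such a bound and extracting a stationary limit $w$ with $\Delta w^m=0$, your claimed lower bound $w\ge C|x-a_1|^{-\gamma_1}$ is unjustified: the constant $C_1(T)$ in \eqref{u-lower-upper-blow-up-rate5} may degrade as $T\to\infty$, and your proposed fix via a small-$B_0$ comparison only produces a $t$-uniform lower bound of order $|x-a_1|^{-(n-2)/m}$---which is precisely the singularity rate of every $W_B$. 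Hence $w$ being some $W_{B'}$ is entirely consistent with what you have established, and no contradiction arises. The ``upgrading'' from rate $(n-2)/m$ to rate $\gamma_1$ uniformly in $t$ is the actual substance of the theorem, not a technicality to be disposed of in a sentence. There are also secondary issues: the comparison Theorems~\ref{uniqueness-thm-bd-domain} and \ref{unique-soln-thm-bd-domain} as stated require blow-up at \emph{all} the $a_i$ with matching two-sided rates, whereas your $W_B$ is bounded at $a_2,\dots,a_{i_0}$; and the Harnack chaining you invoke to propagate blow-up from $\widehat{B_{R_0}(a_1)}$ to all of $\widehat\Omega$ is non-standard, since $mu^{m-1}\to 0$ as $u\to\infty$ and the equation degenerates, so uniform-parabolic Harnack does not apply directly.
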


\begin{proof}[\textbf{Proof of Theorem \ref{existence-oscillating-soln-thm}}:]
We will use a modification of the proof of Theorem 1 of \cite{VW2} to construct the oscillating solution $u$ of \eqref{Dirichlet-blow-up-problem-constant-bdary-value} as the limit of a sequence of solutions $u_j$ of \eqref{Dirichlet-blow-up-problem-constant-bdary-value} with initial value $u_{0,j}$ that satisfy appropriate blow-up condition at the points $a_1,\cdots,a_{i_0}$. 
Let 
\begin{equation*}
\alpha_1>\frac{n-2}{m},\quad\alpha_2=\frac{\frac{2}{1-m}+n}{2},
\end{equation*}
and let $K$ be a compact subset of $\widehat{\Omega}$. We choose $j_1\in\Z^+$ such that $j_1>\max(\delta_0^{-1},\mu_0^{1/\alpha_1},\mu_0^{1/\alpha_2})$. Let
\begin{equation*}
u_{0,1}(x)=\left\{\begin{aligned}
&j_1^{\alpha_2}\qquad\qquad\forall x\in\2{\Omega}\setminus\cup_{i=1}^{i_0}B_{1/j_1}(a_i)\\
&|x-a_i|^{-\alpha_2}\quad\,\,\forall x\in B_{1/j_1}(a_i), i=1,\cdots,i_0.
\end{aligned}\right.
\end{equation*}
Then $u_{0,1}(x)\ge\mu_0$ for any $x\in \overline{\Omega}\bs\left\{a_1,\cdots,a_{i_0}\right\}$.
By Theorem \ref{unique-soln-thm-bd-domain} there exists a unique solution $u_1$ of \eqref{Dirichlet-blow-up-problem-constant-bdary-value} which satisfies $u_1\ge\mu_0$ in $(\overline{\Omega}\bs\left\{a_1,\cdots,a_{i_0}\right\})\times (0,\infty)$. By Theorem \ref{convergence-thm1},
\begin{equation*}
u_1(x,t)\to \mu_0 \quad \mbox{ in }C^2(K) \quad \mbox{ as }t\to\infty.
\end{equation*}
Hence there exists a constant $t_1>1$ such that
\begin{equation}\label{u1-time-t1-near-mu0}
\mu_0\le u_1(x,t_1)\le\mu_0+\frac{1}{2}\quad\forall x\in K.
\end{equation}
For any $j\in\Z^+$, $j>j_1$, let
\begin{equation}\label{u-01j-behaviour-near-ai}
u_{0,1,j}(x)=\left\{\begin{aligned}
&u_{0,1}(x)\qquad\quad\forall x\in\2{\Omega}\setminus B_{1/j}(a_1)\\
&|x-a_1|^{-\alpha_1}\quad\,\,\,\forall x\in B_{1/j}(a_1).
\end{aligned}\right.
\end{equation}
Then $u_{0,1,j}(x)\ge\mu_0$ for any $x\in \overline{\Omega}\bs\left\{a_1,\cdots,a_{i_0}\right\}$ and $j>j_1$.
For any $j>j_1$, let $u_{2,j}$ be the unique solution of \eqref{Dirichlet-blow-up-problem-constant-bdary-value} with $u_0=u_{0,1,j}$ given by Theorem \ref{unique-soln-thm-bd-domain} which satisfies $u_{2,j}\ge\mu_0$ in $(\overline{\Omega}\bs\left\{a_1,\cdots,a_{i_0}\right\})\times (0,\infty)$. Since $u_{0,1,j}$ converges to $u_{0,1}$ in $L^{p}_{loc}\left(\overline{\Omega}\bs\left\{a_1,\cdots,a_{i_0}\right\}\right)$ as $j\to\infty$, by Lemma \ref{stability-thm-bd-domain}
 $u_{2,j}(x,t_1)$ converges to $u_1(x,t_1)$ uniformly in $K$ as $j\to\infty$. Hence there exists $j_2\in\Z^+$,$j_2>j_1$, such that
\begin{equation}\label{u2-u1-time-t1-difference}
|u_{2,j_2}(x,t_1)-u_1(x,t_1)|\le\frac{1}{4}\quad\forall x\in K.
\end{equation} 
Let $u_2=u_{2,j_2}$ and $u_{0,2}=u_{0,1,j_2}$. 
By \eqref{u1-time-t1-near-mu0} and \eqref{u2-u1-time-t1-difference},
\begin{equation*}\label{u2-time-t1-near-mu0}
\mu_0\le u_2(x,t_1)\le\mu_0+\frac{3}{4}\quad\forall x\in K.
\end{equation*}
By \eqref{u-01j-behaviour-near-ai} and Theorem \ref{convergence-thm6}, 
\begin{equation*}
u_2(x,t)\to\infty\quad \mbox{ in }C^2(K) \quad \mbox{ as }t\to\infty.
\end{equation*}
Hence there exists a constant $t_2>t_1+1$ satisfying
\begin{equation*}\label{u2-time-t2-infty}
u_2(x,t_2)\ge 3\quad\forall x\in K.
\end{equation*}
Repeating the above argument we get sequences $\{u_{0,k}\}_{k=1}^{\infty}\subset L^{p}_{loc}\left(\overline{\Omega}\bs\left\{a_1,\cdots,a_{i_0}\right\}\right)$, $\{j_k\}_{k=1}^{\infty}\subset\Z^+$ and $\{t_k\}_{k=1}^{\infty}\subset \R^+$, such that $j_{k+1}>j_k$ and $t_{k+1}>t_k+1$ for all $k\in\Z^+$, which satisfy
\begin{equation*}\label{u0k-uniform-lower-bd}
u_{0,k}(x)\ge\mu_0\quad\mbox{ in }\overline{\Omega}\bs\left\{a_1,\cdots,a_{i_0}\right\}\quad\forall k\in\Z^+, 
\end{equation*}
\begin{equation*}\label{uk=u-k-1}
u_{0,k}(x)=u_{0,k-1}(x)\quad\forall x\in\2{\Omega}\setminus B_{1/j_k}(a_1),k\ge 2
\end{equation*}
and
\begin{equation*}\label{u0j-behaviour-near-ai}
u_{0,k}(x)=\left\{\begin{aligned}
&|x-a_1|^{-\alpha_2}\quad\,\,\forall x\in B_{1/j_k}(a_1)\quad\mbox{ if $k\ge 1$ is odd}\\
&|x-a_1|^{-\alpha_1}\quad\,\,\forall x\in B_{1/j_k}(a_1)\quad\mbox{ if $k\ge 2$ is even}
\end{aligned}\right.
\end{equation*}
and if $u_k$ is the solution of \eqref{Dirichlet-blow-up-problem-constant-bdary-value} with $u_0=u_{0,k}$ given by Theorem \ref{unique-soln-thm-bd-domain}, then
$u_k$ satisfies
\begin{equation}\label{uk-odd-time-value}
\mu_0\le u_k(x,t_l)\le\mu_0+\frac{1}{2^l}+\cdots+\frac{1}{2^k}\le\mu_0+\frac{1}{2^{l-1}}\quad\forall x\in K, 1\le l\le k\mbox{ and $l$ is odd},
\end{equation}
\begin{equation}\label{uk-even-time-value}
u_k(x,t_l)>l+\frac{3}{2}-\left(\frac{1}{2^l}+\cdots+\frac{1}{2^k}\right)>l\quad\forall x\in K, 1\le l\le k\mbox{ and $l$ is even}
\end{equation}
and
\begin{equation*}
|u_k(x,t_l)-u_{k+1}(x,t_l)|<\frac{1}{2^k}\quad\forall x\in K, 1\le l\le k,k\in\Z^+.
\end{equation*}
Let
\begin{equation*}
u_0(x)=\left\{\begin{aligned}
&j_1^{\alpha_2}\qquad\qquad\forall x\in\2{\Omega}\setminus\cup_{i=1}^{i_0}B_{1/j_1}(a_i)\\
&|x-a_i|^{-\alpha_2}\quad\,\forall x\in B_{1/j_1}(a_i), i=2,\cdots,i_0\\
&u_{0,k}(x)\qquad\,\,\,\forall 1/j_{k+1}\le |x-a_1|\le 1/j_k, k\ge\Z^+.
\end{aligned}\right.
\end{equation*}
Then $u_0\ge\mu_0$ in $\overline{\Omega}\bs\left\{a_1,\cdots,a_{i_0}\right\}$, 
\begin{equation*}
u_0(x)=\left\{\begin{aligned}
&|x-a_1|^{-\alpha_2}\quad\forall 1/j_{k+1}\le |x-a_1|\le 1/j_k, k\ge\Z^+\quad\mbox{ and $k$ is odd}\\
&|x-a_1|^{-\alpha_1}\quad\forall 1/j_{k+1}\le |x-a_1|\le 1/j_k, k\ge\Z^+\quad\mbox{ and $k$ is even},
\end{aligned}\right.
\end{equation*}
and $u_{0,k}$ converges to $u_0$ in $L^{p}_{loc}\left(\overline{\Omega}\bs\left\{a_1,\cdots,a_{i_0}\right\}\right)$ as $k\to\infty$. let $u$ be the unique solution of \eqref{Dirichlet-blow-up-problem-constant-bdary-value} given by Theorem \ref{unique-soln-thm-bd-domain}. Then by Lemma \ref{stability-thm-bd-domain} $u_k$ converges to $u$ on every compact subset of $\widehat{\Omega}\times (0,\infty)$ as $k\to\infty$. letting $k\to\infty$ in 
\eqref{uk-odd-time-value} and \eqref{uk-even-time-value} we have
\begin{align*}
&\mu_0\le u(x,t_l)\le\mu_0+\frac{1}{2^{l-1}}\quad\forall x\in K, l\in\Z^+\mbox{ and $l$ is odd}\\
\Rightarrow\quad&\lim_{k\to\infty}u(x,t_{2k-1})=\mu_0\quad\mbox{ uniformly in }K
\end{align*}
and
\begin{align*}
&u(x,t_l)\ge l\quad\forall x\in K, l\in\Z^+\mbox{ and $l$ is even}\\
\Rightarrow\quad&\lim_{k\to\infty}u(x,t_{2k})=\infty\quad\mbox{ uniformly in }K
\end{align*}
and the theorem follows.
\end{proof}

\end{document}